\documentclass[a4paper,10pt]{amsart}
\usepackage{amssymb,amscd,amsmath}

\pagestyle{plain}

\theoremstyle{plain}
\newtheorem{thm}{Theorem}[section]
\newtheorem{prop}[thm]{Proposition}

\theoremstyle{definition}

\theoremstyle{remark}
\newtheorem{rem}{Remark}[section]

\newcommand{\forme}[1]{}

%{\:\vert \!\! /\:}

\newcommand{\Ot}{{\mathbf O}_{\mathrm \theta}}
\newcommand{\Or}{{\mathbf O}^{\mathrm \theta}}

\title{Association schemes in which the thin residue is an elementary abelian $p$-group of rank $2$}

\author[M.~Hirasaka]{Mitsugu Hirasaka}
\address{Department of Mathematics, Pusan National University, Busan 609-735, Republic of Korea}
\email{hirasaka@pusan.ac.kr}

\author[K.~Kim]{Kijung Kim}
\address{Department of Mathematics, Pusan National University, Busan 609-735, Republic of Korea}
\email{knukkj@pusan.ac.kr}

\date{\today}
\subjclass[2010]{05E15; 05E30}

%%%%%%    TEXT START    %%%%%%%%%%%%%%%%%%%%%%%%%%%%%%%%%%%%%%%%%%%%%%%%%%%%%%%%%%%%%%%%%%%%%%%%%%%%%%%%%%%%%
\begin{document}
\maketitle

\begin{abstract}
In this article, we investigate the existence and schurity problem of association schemes whose
thin residues are isomorphic to an elementary abelian $p$-group of rank $2$.

\end{abstract}

{\footnotesize {\bf Key words:} association scheme; schurian; thin residue; linear space.}

\footnotetext[1]{Corresponding author: Kijung Kim}
\footnotetext[2]{The first author's research was supported by Basic Science Research Program through the National Research Foundation of Korea(NRF) funded by the Ministry of Education (2013R1A1A2012532). The second author's research was supported by Basic Science Research Program through the National Research Foundation of Korea(NRF) funded by the Ministry of Education (2013R1A1A2005349).}

%%%%%%%%%%%%%%%%%%%%%%%%%%%%%%%%%%%%%%%%%%%%%%%%%%%%%%%%%%%%%%%%%%%%%%%%%%%%%%%%%%%%%%%%%%%%%%%%%%%%%%%%%%
\section{Introduction}\label{sec:intro}

Let $G$ be a transitive permutation group on a finite set $X$.
We denote by $\mathcal{R}_G$ the set of orbits of the induced action of $G$ on $X \times X$.
It is well known that $\mathcal{R}_G$ satisfies the following conditions:
\begin{enumerate}
\item $\{ (x, x) \mid  x \in X \} \in \mathcal{R}_G$;
\item For each $s \in \mathcal{R}_G$, $s^* := \{(x, y) \mid (y, x) \in s \} \in \mathcal{R}_G$;
\item For all $ s, t, u \in \mathcal{R}_G$ and $x, y \in X$, $| xs \cap yt^* |$ is constant whenever $(x, y) \in u$,
where $xs:=\{ y \in X \mid (x,y) \in s \}$.
\end{enumerate}

\vskip5pt
The following definition of an association scheme generalizes the above observations on the orbitals of a transitive permutation group.
Let $X$ be a non-empty set, and let $S$ be a partition of $X \times X$.
The set $S$ is called an \textit{association scheme} (or shortly  a \textit{scheme}) on $X$ if it satisfies the following conditions:
\begin{enumerate}
\item $1_X := \{ (x, x) \mid  x \in X \} \in S$;
\item For each $s \in S$, $s^* := \{(x, y) \mid (y, x) \in s \} \in S$;
\item For all $ s, t, u \in S$ and $x, y \in X$, $c_{st}^u :=| xs \cap yt^* |$ is constant whenever $(x, y) \in u$,
where $xs:=\{ y \in X \mid (x,y) \in s \}$.
\end{enumerate}

For each element $s$ in $S$, we set $n_s:=c_{ss^*}^{1_X}$ and call this (positive) integer the \textit{valency} of $s$.
For a subset $U$ of $S$, put $n_U = \sum_{u \in U} n_u$.
We call $n_S$ the \textit{order} of $S$.

\vskip5pt
We say that an association scheme $S$ is \textit{schurian} if $S=\mathcal{R}_G$ for a transitive permutation group $G$ on $X$.

\vskip5pt
Let $X$ be a set, and let $S$ be an association scheme in the above sense.
A subset $T$ of $S$ is called \textit{closed} if $\bigcup_{t \in T}t$ is an equivalence relation on $X$.
For a closed subset $T$ of $S$, we define $X/T:= \{ xT \mid x \in X\}$, called the set of \textit{cosets} of $T$ in $X$, and $S//T:=\{ s^T \mid s \in S \}$,
where $xT:= \bigcup_{t \in T} xt$ and $s^T:= \{ (xT,yT) \mid y \in xTsT \}$. Then $S//T$ is an association scheme on $X/T$ (see \cite[Theorem 4.1.3]{zies2}).
We call $S//T$ the \textit{factor} scheme of $S$ over $T$.

\vskip5pt
If $G$ acts regularly on $X$,
then $\mathcal{R}_G$ is an association scheme on $X$ such that $\mathcal{R}_G$ is a group under the relational product.
We call such an association scheme \textit{thin}.

\vskip5pt
%Let $G$ be a transitive permutation group on a finite set $X$.
Let $N$ be a normal subgroup of $G$ which contains a point stabilizer $G_x$.
Then $(G/N, \mathcal{R}_G)$ is a thin association scheme, where $\mathcal{R}_G$ is the set of
the orbitals of $G$ acting on the right cosets $G/N$ by the right multiplication.
An analogy of this situation gives the following concept.
For a given association scheme $S$, it is known that there exists the smallest closed subset $\Or(S)$ such that $S//\Or(S)$ is thin (see \cite{zies}).
We call $\Or(S)$ the \textit{thin residue} of $S$.

\vskip5pt
The structure of the thin residue has played an important role in the schurity problem.
It is known that association schemes are schurian if their thin residues are finite groups under the relational product
and have a distributive normal subgroup lattice (see \cite{zi}).
However, in the case that their thin residues are elementary abelian groups of rank $\geq 2$, they do not have a distributive normal subgroup lattice.
So, one might be curious what can be said in this case.
In this article, we consider the schurity problem of association schemes whose thin residues are elementary abelian groups of rank $2$.

\vskip5pt
For a closed subset $T$ of $S$, $n_S / n_T$ is called the \textit{index} of $T$ in $S$.
The following result is given in \cite[Corollary 2.8]{chk}.
Let $S$ be an association scheme such that
\begin{equation}\label{A}
\Or(S) \simeq C_p \times C_p ~~\text{and}
\end{equation}
\begin{equation}\label{B}
\{ n_s \mid s \in S \setminus \Or (S) \} = \{ p \},
\end{equation}
where $p$ is a prime number.
Then $n_S/n_{\Or(S)}  \leq p^2 + p + 1$.
We denote $n_S/n_{\Or(S)}$ by $\delta(S)$.
In this article, we investigate the existence of such association schemes under
certain extra conditions which we impose on $\delta(S)$.

\vskip5pt
Now we state main results with respect to $\delta(S)$.
\begin{itemize}
\item When $\delta(S) \leq 2$, there do not exist association schemes such that (\ref{A}) and (\ref{B}) hold (see Proposition \ref{prop:del2-ne}).
\item When $3 \leq \delta(S) \leq p+2$, we give a construction of association schemes such that (\ref{A}) and (\ref{B}) hold (see Theorem \ref{thm:main-as1}).
\item When $\delta(S) = 3$, there exists a unique association scheme such that (\ref{A}) and (\ref{B}) hold, up to isomorphism (see Theorem \ref{thm:del3-unique}).
\item When $4 \leq \delta(S) \leq p+2$, there exists a non-schurian association scheme $S$ such that (\ref{A}), (\ref{B}) and (\ref{con-three}) hold
(see Theorem \ref{thm:non-schur}).
\item When $\delta(S) = p+2$ is an odd prime, every association scheme $S$ satisfying (\ref{A}), (\ref{B}) and (\ref{con-three})
is non-schurian (see Theorem \ref{thm:main1}).
\item When $\delta(S) = p^2$ or $\delta(S) = p^2 + p + 1$, there exists a schurian association scheme such that (\ref{A}) and (\ref{B}) hold (see Section \ref{sec:main4}).
\end{itemize}

This article is organized as follows.
In Section \ref{sec:pre}, we prepare some terminology and notations.
In Section \ref{sec:main0}--\ref{sec:main4}, we prove main results.
In Section \ref{sec:con}, we give concluding remarks.

%%%%%%%%%%%%%%%%%%%%%%%%%%%%%%%%%%%%%%%%%%%%%%%%%%%%%%%%%%%%%%%%%%%%%%%%%%%%%%%%%%%%%%%%%%%%%%%%%%%%%%%%%%%%%
\section{Preliminaries}\label{sec:pre}
In this section, we review some notations and known facts about association schemes.
Throughout this article, we use the notation given in \cite{zies2}.

%%%%%%%%%%%%%%%%%%%%%%%%%%%%%%%%%%%%%%%%%%%%%%%%%%%%%%%%%%%%%%%%%%%%%%%%%%%%%%%%%%%%%%%
\subsection{Association schemes}\label{subsec:pre1}

Let $S$ be an association scheme on $X$.
We denote by $r(x,y)$ the element of $S$ containing $(x,y) \in X \times X$.

Let $P$ and $Q$ be nonempty subsets of $S$. We define $PQ$ to be the set of all elements $s \in S$ such that there exist elements
$p \in P$ and $q \in Q$ with $c_{pq}^s \neq 0$. The set $PQ$ is called the \textit{complex product} of $P$ and $Q$.
If one of the factors in a complex product consists of a single element $s$, then one usually writes  $s$ for $\{ s \}$.

A closed subset $T$ is called \textit{thin} if all elements of $T$ have valency 1.
The set $\{ s \mid n_s=1 \}$ is called the \textit{thin radical} of $S$ and denoted by $\Ot(S)$.
Note that $T$ is thin if and only if $T$ is a group under the relational product.

A closed subset $T$ of $S$ is called \textit{strongly normal} in $S$, denoted by $T \lhd^\sharp S$, if $s^* T s \subseteq T$ for every $s \in S$.
We put $\Or(S) := \bigcap_{T \lhd^\sharp S} T $ and call it the \textit{thin residue} of $S$.
It follows from \cite[Theorem 2.3.1]{zies} that
\begin{equation}\label{thinresidue}
\Or (S) = \langle \bigcup_{s \in S} s^*s \rangle.
\end{equation}

Let $S_1$ be an association scheme on $X_1$. A bijective map $\phi$ from $X \cup S$ to $X_1 \cup S_1$ is called an \textit{isomorphism} if it satisfies the following conditions:
\begin{enumerate}
\item $\phi(X) \subseteq X_1$ and $\phi(S) \subseteq S_1$;
\item For all $x, y \in X$ and $s \in S$ with $(x,y) \in s$, $(\phi(x), \phi(y)) \in \phi(s)$.
\end{enumerate}

An isomorphism $\phi$ from $X \cup S$ to $X \cup S$ is called an \textit{automorphism} of $S$
if $\phi(s)=s$ for all $s \in S$. We denote by $\mathrm{Aut}(S)$ the automorphism group of $S$.

On the other hand, we say that two association schemes $S$ and  $S_1$ are \textit{algebraically isomorphic} or have the \textit{same intersection numbers} if there exists a bijection $\iota$ from  $S$ to $S_1$ such that $c_{rs}^{t} = c_{\iota(r) \iota(s)} ^{\iota(t)}$ for all $r, s, t \in S$.

%%%%%%%%%%%%%%%%%%%%%%%%%%%%%%%%%%%%%%%%%%%%%%%%%%%%%%%%%%%%%%%%%%%%%%%%%%%%%%%%%%%%%%%
\subsection{Extensions of thin schemes by thin schemes}\label{subsec:pre2}

Let $G$ be a permutation group acting regularly on a finite set $X$.
Let $H$ be a normal subgroup of $G$.
We set $\tilde{g} = \{ (x, y) \in X \times X \mid x^g = y \}$.
We denote by $\widetilde{G}$ the set $\{ \tilde{g} \mid g \in G \}$.
Note that $\widetilde{G}$ is an association scheme on $X$.

\vskip5pt
For a binary relation $t \subseteq X \times X$, we define the adjacency matrix $A_t$ whose columns and rows are indexed by the elements of $X$ as follows:
\begin{equation*}\label{product}
(A_t)_{xy} = \left\{
                      \begin{array}{ll}
                      1 & \hbox{if $(x,y) \in t$;} \\
                      0 & \hbox{otherwise.}
                      \end{array}
                     \right.
\end{equation*}
Note that the condition (iii) in the second paragraph of Section \ref{sec:intro} is equivalent to $A_sA_t = \sum_{u \in S} c_{st}^uA_u$.

For $r, s \subseteq X \times X$, we set $r \cdot s = \{ (\alpha,\gamma) \mid (\alpha,\beta) \in r, (\beta,\gamma) \in s ~~\text{for some}~~ \beta \in X\}$.

\vskip5pt
Fix $x \in X$.
For $\sigma \in (G/H) \setminus \{1_{G/H} \}$, we take $t_\sigma \subseteq \bigcup_{\tau \in G/H} x^{H\tau} \times x^{H\sigma\tau}$.

In order to define an association scheme whose relation set contains
$\{ \tilde{h} \mid h \in H \} \cup \{ t_\sigma \mid \sigma \in (G/H) \setminus \{1_{G/H} \}\}$,
we consider the following conditions:

\begin{equation}\label{partition}
\{ \tilde{h} \cdot t_\sigma \mid h \in H \} = \{ t_\sigma \cdot \tilde{h} \mid h \in H \}
~~\text{is a partition of} \bigcup_{\tau \in G/H} x^{H\tau} \times x^{H\sigma\tau};
\end{equation}
\begin{equation}\label{trans}
t_\sigma^\ast = t_{\sigma^{-1}};
\end{equation}
\begin{equation}\label{product}
A_{t_\sigma} A_{t_\tau} = \left\{
                      \begin{array}{ll}
                      |K_\sigma| \sum_{s \in K_\sigma} A_s & \hbox{if $\sigma\tau = 1_{G/H}$;} \\
                      n_{\sigma, \tau} \sum_{s \in \{ t_{\sigma\tau} \cdot \tilde{h} \mid h \in H \}} A_s & \hbox{otherwise,}
                      \end{array}
                     \right.
\end{equation}
where $K_\sigma :=\{\tilde{h} \in \widetilde{H} \mid \tilde{h} \cdot t_\sigma = t_\sigma \}$
and $n_{\sigma, \tau}$ is a positive integer depending on $\sigma$ and $\tau$.
It follows from $(\ref{partition})$ that for each $\tilde{h} \cdot t_\sigma$, there exists $h_1 \in H$ such that $\tilde{h} \cdot t_\sigma = t_\sigma \cdot \tilde{h_1}$.

\begin{prop}\label{thm:conSchur}
For $\sigma \in (G/H) \setminus \{1_{G/H} \}$, let $t_\sigma$ be a subset of $\bigcup_{\tau \in G/H} x^{H\tau} \times x^{H\sigma\tau}$
satisfying $\mathrm{(\ref{partition})}$, $\mathrm{(\ref{trans})}$ and $\mathrm{(\ref{product})}$.
Put $S = \{ \tilde{h} \cdot t_\sigma \mid h \in H, \sigma \in G/H \}$, where $t_{1_{G/H}} = 1_X$.
Then $S$ is an association scheme on $X$.
\end{prop}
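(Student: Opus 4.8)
The plan is to verify directly that $S$ satisfies the three defining axioms of an association scheme, together with the standing requirement that $S$ be a partition of $X \times X$ into nonempty relations. Throughout I would write $P_h := A_{\tilde h}$ for $h \in H$; since $\widetilde{H}$ is thin, each $P_h$ is a permutation matrix, whence $A_{\tilde h \cdot r} = P_h A_r$ and $A_{r \cdot \tilde h} = A_r P_h$ are again $0$--$1$ matrices for any relation $r$, and $P_a P_b = P_{ab}$. I would also use repeatedly the consequence of $(\ref{partition})$ recorded after its statement: the families $\{\tilde h \cdot t_\sigma \mid h \in H\}$ and $\{t_\sigma \cdot \tilde h \mid h \in H\}$ coincide, so every left translate $\tilde h \cdot t_\sigma$ equals a right translate $t_\sigma \cdot \tilde{h_1}$, and conversely.

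First I would settle the partition property together with axioms (i) and (ii). Decomposing $X$ into the $H$-orbits $x^{H\tau}$ ($\tau \in G/H$), every pair $(u,v) \in X \times X$ lies in a unique block $x^{H\tau} \times x^{H\rho}$, and with $\sigma := \rho\tau^{-1}$ it lies in exactly one of the sets $\bigcup_{\tau} x^{H\tau} \times x^{H\sigma\tau}$. For $\sigma = 1_{G/H}$ the relations in question are the members of $\widetilde H$, partitioning the within-block pairs; for $\sigma \neq 1_{G/H}$, condition $(\ref{partition})$ says precisely that $\{\tilde h \cdot t_\sigma \mid h \in H\}$ partitions the corresponding off-block set. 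Hence $S$ is a partition of $X \times X$, and each relation, a permutation image of some $t_\sigma$, is nonempty. Axiom (i) is immediate, since $1_X = \tilde e \cdot t_{1_{G/H}} \in S$ for the identity $e \in H$. For axiom (ii), taking $s = \tilde h \cdot t_\sigma$ and using $(r \cdot r')^* = r'^* \cdot r^*$ together with $\tilde h^* = \widetilde{h^{-1}}$ and $(\ref{trans})$ gives $s^* = t_{\sigma^{-1}} \cdot \widetilde{h^{-1}}$, which the commuting property rewrites as $\tilde{h'} \cdot t_{\sigma^{-1}} \in S$.

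The substance is axiom (iii), which by the matrix reformulation noted in Section \ref{sec:pre} amounts to showing that $A_s A_t$ is a nonnegative-integer combination of $\{A_u \mid u \in S\}$; since these matrices have pairwise disjoint supports, such a combination is unique, so its coefficients are forced to be constant intersection numbers. Writing $s = \tilde h \cdot t_\sigma$ and $t = \tilde k \cdot t_\tau$, I compute
\[
A_s A_t = P_h A_{t_\sigma} P_k A_{t_\tau} = P_{h k'} A_{t_\sigma} A_{t_\tau},
\]
where the second equality moves $P_k$ to the left via $A_{t_\sigma} P_k = A_{t_\sigma \cdot \tilde k} = A_{\tilde{k'} \cdot t_\sigma} = P_{k'} A_{t_\sigma}$. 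If $\sigma = 1_{G/H}$ or $\tau = 1_{G/H}$, one factor is the identity matrix and the product collapses to a single $A_u$ with $u \in S$; otherwise I apply $(\ref{product})$. When $\sigma\tau = 1_{G/H}$ this yields $A_s A_t = |K_\sigma| \sum_{\tilde l \in K_\sigma} A_{\widetilde{h k' l}}$, a sum of distinct elements $\widetilde{h k' l} \in \widetilde H \subseteq S$ with constant coefficient $|K_\sigma|$. When $\sigma\tau \neq 1_{G/H}$, each term $P_{h k'} A_{t_{\sigma\tau}\cdot \tilde m}$ equals $A_{t_{\sigma\tau} \cdot \widetilde{n m}}$ for a suitable $n \in H$ by the commuting property, again an element of $S$, so $A_s A_t$ is $n_{\sigma,\tau}$ times a sum of distinct members of $S$. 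In every case $A_s A_t$ lies in the required form, which proves (iii) and completes the verification.

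The main obstacle I anticipate is purely the bookkeeping inside axiom (iii): one must consistently convert between left and right translates by $\widetilde H$ and track exactly which relations of $S$ arise, so that after invoking $(\ref{product})$ the resulting matrices are genuine adjacency matrices of single elements of $S$ with no unexpected overlaps. Everything else reduces cleanly to the stated conditions $(\ref{partition})$--$(\ref{product})$ and the elementary algebra of permutation matrices.
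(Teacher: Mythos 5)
Your proposal is correct and follows essentially the same route as the paper: deduce the partition property and closure under $*$ from conditions $(\ref{partition})$ and $(\ref{trans})$, and verify the intersection-number axiom by moving the $\widetilde H$-translates across $A_{t_\sigma}A_{t_\tau}$ via the left/right commuting property and then invoking $(\ref{product})$. Your treatment of $s^*$ via $(r\cdot r')^*=r'^*\cdot r^*$ is marginally slicker than the paper's inclusion-plus-cardinality argument, but it is not a genuinely different approach.
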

\begin{proof}
It follows from $(\ref{partition})$ that $S$ is a partition of $X \times X$.
Clearly, $1_X \in S$ and $\tilde{h}^\ast \in S$ for each $\tilde{h} \in \widetilde{H}$.

\vskip5pt
For each $\tilde{h} \cdot t_\sigma \in S \setminus \widetilde{H}$,
we verify that $(\tilde{h} \cdot t_\sigma)^\ast \in S$.
It suffices to check $(\tilde{h}\cdot t_\sigma)^\ast = t_{\sigma^{-1}} \cdot \tilde{h}^{-1}$ by $(\ref{partition})$.
First of all, we show $(\tilde{h}\cdot t_\sigma)^\ast \subseteq t_{\sigma^{-1}} \cdot \tilde{h}^{-1}$.
Let $(\alpha, \beta) \in (\tilde{h}\cdot t_\sigma)^\ast$.
Then
\[(\beta, \gamma) \in \tilde{h} ~~\text{and}~~ (\gamma, \alpha) \in t_\sigma\]
for some $\gamma \in X$.
If follows from definition and $(\ref{trans})$ that
$(\gamma, \beta) \in \tilde{h}^{-1} ~~\text{and}~~ (\alpha, \gamma) \in t_\sigma^\ast = t_{\sigma^{-1}}$.
So, $(\alpha, \beta) \in t_{\sigma^{-1}} \cdot \tilde{h}^{-1}$.
Thus, we have \[(\tilde{h}\cdot t_\sigma)^\ast \subseteq t_{\sigma^{-1}} \cdot \tilde{h}^{-1}.\]

Since $|(\tilde{h}\cdot t_\sigma)^\ast| = |t_{\sigma^{-1}} \cdot \tilde{h}^{-1}|$,
we have $(\tilde{h}\cdot t_\sigma)^\ast = t_{\sigma^{-1}} \cdot \tilde{h}^{-1}$ and
hence $(\tilde{h}\cdot t_\sigma)^\ast \in S$.

\vskip5pt
Finally, we show that
\[A_{\tilde{h} \cdot t_\sigma} A_{\tilde{g} \cdot t_\tau} = \sum_{s \in S}  c_{\tilde{h} \cdot t_\sigma  \tilde{g} \cdot t_\tau}^s A_s\]
for $\tilde{h} \cdot t_\sigma, \tilde{g} \cdot t_\tau \in S$.
It follows from $(\ref{partition})$ that
$(\tilde{h} \cdot t_\sigma) \cdot (\tilde{g} \cdot t_\tau) = (\tilde{h} \cdot t_\sigma) \cdot (t_\tau \cdot \tilde{g}_1)$ for some $g_1 \in H$.
Since $A_{\tilde{h} \cdot t_\sigma} = A_{\tilde{h}} A_{t_\sigma}$ and $A_{t_\tau \cdot \tilde{g}_1} = A_{t_\tau} A_{\tilde{g}_1} $,
we compute $A_{\tilde{h}}(A_{t_\sigma}A_{t_\tau})A_{\tilde{g}_1}$ by $(\ref{product})$.

If $\sigma\tau = 1_{G/H}$, then \[|K_\sigma| \sum_{s \in K_\sigma} A_{\tilde{h}} A_s A_{\tilde{g}_1} = |K_\sigma| \sum_{s \in K_\sigma} A_{\tilde{h}s\tilde{g}_1}.\]

If $\sigma\tau \neq 1_{G/H}$, then \[n_{\sigma, \tau}\sum_{s \in \{ t_{\sigma\tau} \cdot \tilde{h}_1 \mid h \in H \}} A_{\tilde{h}_1} A_s A_{\tilde{g}_1} =
n_{\sigma, \tau}\sum_{s \in \{ \tilde{h} \cdot t_{\sigma\tau} \cdot \tilde{h}_1 \cdot \tilde{g}_1 \mid h_1 \in H \}} A_s.\]

This completes the proof.
\end{proof}

%%%%%%%%%%%%%%%%%%%%%%%%%%%%%%%%%%%%%%%%%%%%%%%%%%%%%%%%%%%%%%%%%%%%%%%%%%%%%%%%%%%%%%%%%%%%%%%%%%%%%%%%%%%%%
\section{The case of $3 \leq \delta(S) \leq p+2$}\label{sec:main0}
First of all, we show that
there do not exist association schemes such that (\ref{A}) and (\ref{B}) hold and $\delta(S) \leq 2$.

\begin{prop}\label{prop:del2-ne}
Let $S$ be an association scheme on $X$ such that $\mathrm{(\ref{A})}$ and $\mathrm{(\ref{B})}$ hold.
Then $\delta(S) \geq 3$.
\end{prop}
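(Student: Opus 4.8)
The plan is to rule out $\delta(S)=1$ and $\delta(S)=2$ separately, the second case being the real work. Since $\Or(S)\cong C_p\times C_p$ is thin with $p^2$ elements, all of valency $1$, we have $n_{\Or(S)}=p^2$. If $\delta(S)=1$, then $n_S=n_{\Or(S)}$, and as $\Or(S)\subseteq S$ with positive valencies this forces $S=\Or(S)$, so $S\setminus\Or(S)=\varnothing$, contradicting $\mathrm{(\ref{B})}$. Recall also that, $\Or(S)$ being the thin residue, the factor $S//\Or(S)$ is thin and its order equals the index $\delta(S)$, so $\delta(S)$ is a positive integer and it remains to exclude $\delta(S)=2$.

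Assume $\delta(S)=2$. Then $S//\Or(S)$ is the thin scheme of order $2$, i.e.\ the group $C_2$, so $\Or(S)$ has exactly two cosets $C_0,C_1$ in $X$, each of size $p^2$. Every $s\in S\setminus\Or(S)$ maps to the nontrivial element of $C_2$ and hence joins $C_0$ to $C_1$; fixing $x\in C_0$, the sets $xs$ with $s\in S\setminus\Or(S)$ are disjoint, cover $C_1$, and each has size $n_s=p$ by $\mathrm{(\ref{B})}$, so $|S\setminus\Or(S)|=p^2/p=p$. Write $S\setminus\Or(S)=\{s_1,\dots,s_p\}$.

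Next I would coordinatize. Choosing base points and identifying $\Or(S)\cong\mathbb{F}_p^2$ acting regularly on $C_0$ and on $C_1$, I encode each $s_i$ by $R_i=\{(u,v)\in\mathbb{F}_p^2\times\mathbb{F}_p^2\mid (x_0u,y_0v)\in s_i\}$, so that $\{R_1,\dots,R_p\}$ partitions $\mathbb{F}_p^2\times\mathbb{F}_p^2$. The crucial point, and the main obstacle, is that multiplication by a thin element carries each $s_i$ to a single element of $S\setminus\Or(S)$ (a valency-$1$ relation composed with a valency-$p$ relation is again a single valency-$p$ relation), so left and right multiplication by $\Or(S)$ permute $\{s_1,\dots,s_p\}$; concretely this means the partition $\{R_i\}$ is invariant under the full translation group of $\mathbb{F}_p^2\times\mathbb{F}_p^2$, translating the two coordinates independently. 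Since there are only $p$ blocks and an elementary abelian $p$-group acting on $p$ points is cyclic of order dividing $p$, the blocks must be exactly the $p$ cosets of a fixed index-$p$ subgroup; equivalently $R_i=\lambda^{-1}(c_i)$ for a single surjective linear functional $\lambda(u,v)=\alpha(u)+\beta(v)$ and distinct $c_i\in\mathbb{F}_p$. The valency condition $|xs_i|=p$ forces $\beta\neq 0$, and the symmetric condition on $s_i^{\ast}$ forces $\alpha\neq 0$.

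Finally, computing the support closes the argument. Two points $y_0v,y_0v'\in C_1$ lie in the relation $s_i^{\ast}s_i$ precisely when they share a common $s_i$-neighbour $x_0w\in C_0$, i.e.\ when $\alpha(w)+\beta(v)=c_i=\alpha(w)+\beta(v')$, which (using $\alpha\neq0$ for existence of $w$) happens exactly when $\beta(v-v')=0$. Hence $\mathrm{supp}(s_i^{\ast}s_i)=\ker\beta$, a subgroup of order $p$, and it is the \emph{same} subgroup for every $i$ since all blocks share the functional $\beta$. By $\mathrm{(\ref{thinresidue})}$ this gives $\Or(S)=\langle\bigcup_i s_i^{\ast}s_i\rangle=\langle\ker\beta\rangle$, which has order $p$, contradicting $\Or(S)\cong C_p\times C_p$. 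Therefore $\delta(S)=2$ is impossible and $\delta(S)\geq 3$. The only genuinely nonroutine step is the translation-invariance of $\{R_i\}$ and the resulting description of the blocks as cosets of a hyperplane; once that is established, the support computation is elementary linear algebra.
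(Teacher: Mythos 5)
Your proof is correct, and it lands on essentially the same contradiction as the paper's --- namely that in the $\delta(S)=2$ case the formula (\ref{thinresidue}) would force $\Or(S)$ to coincide with a single subgroup of order $p$, contradicting (\ref{A})/(\ref{B}) --- but the route you take to it is genuinely different and considerably heavier. The paper's argument is three lines: since $S=\Or(S)\cup s_0\Or(S)$ and $\Or(S)$ is thin, every $s\in S\setminus\Or(S)$ is a thin translate $s_0h$, whence $ss^{\ast}=s_0s_0^{\ast}$ for all such $s$; then (\ref{thinresidue}) gives $\Or(S)=s_0s_0^{\ast}$ (this set being already closed), and the cited fact $|s_0s_0^{\ast}|=n_{s_0}$ yields $n_{s_0}=p^2$, against (\ref{B}). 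You instead coordinatize $\Or(S)$ as $\mathbb{F}_p^2$ acting regularly on the two cosets, use left and right multiplication by thin elements to show the partition $\{R_i\}$ is translation-invariant, identify the blocks as cosets of a hyperplane $\ker\lambda$ with $\lambda(u,v)=\alpha(u)+\beta(v)$, and compute $s_i^{\ast}s_i=\ker\beta$ by hand. Each step checks out (in particular, determining $s_i^{\ast}s_i$ from pairs in $C_1\times C_1$ alone is legitimate by the constancy of intersection numbers, and the block-system argument correctly forces the hyperplane structure). What your version buys is self-containedness --- you re-derive both ``all $ss^{\ast}$ coincide'' and ``$|ss^{\ast}|=n_s$'' rather than quoting the latter from the literature --- plus an explicit picture of what a hypothetical $\delta=2$ scheme would have to look like; the cost is a page of linear algebra where the one-line observation $s=s_0h\Rightarrow ss^{\ast}=s_0s_0^{\ast}$ suffices. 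Your disposal of $\delta(S)=1$ (via $S\setminus\Or(S)=\varnothing$ contradicting (\ref{B})) also differs mildly from the paper's (which notes $S=\Or(S)$ thin forces $\Or(S)=\{1_X\}$, against (\ref{A})), but both are valid.
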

\begin{proof}
If $\delta(S) = 1$, then $S = \Or(S)$.
By $\mathrm{(\ref{A})}$, $S$ is a group. By the definition of $\Or(S)$,
we have $\Or(S) = \{ 1_X \}$. This contradicts $\mathrm{(\ref{A})}$.

\vskip5pt
Suppose $\delta(S)=2$.
Then we have
\[S = \Or(S) \cup s_0\Or(S)\]
for some $s_0 \in S \setminus \Or(S)$.
Since $\Or(S)$ is thin, every element of $S \setminus \Or(S)$ has a form $s_0h$ for some $h \in \Or(S)$.
So,
\[s_0s_0^\ast  = ss^\ast\]
for each $s \in S \setminus \Or(S)$.
By $(\ref{thinresidue})$, we have $\Or(S)=s_0s_0^\ast $.
Note that $|ss^\ast |=n_s$ (see \cite[Lemma 4.2]{hirasakameta}).

\vskip5pt
The equation $(\ref{A})$ implies that every element of $S \setminus \Or(S)$ has the valency $p^2$.
This contradicts $\mathrm{(\ref{B})}$.
Therefore, we have $\delta(S) \geq 3$.
\end{proof}

In this section, we construct an association scheme $S$ such that (\ref{A}) and (\ref{B}) hold and
\begin{equation}\label{con-three}
s_1s_1^\ast \neq s_2s_2^\ast ~\text{for}~ s_1, s_2 \in S \setminus \Or(S) ~\text{with}~ s_1\Or(S) \neq s_2\Or(S).
\end{equation}
From now on, we denote by $\mathbb{F}_p$ and $G_\delta$ the finite field with $p$ elements and a finite group of order ${\delta(S)}$, respectively.

\vskip5pt
Set
\begin{equation}\label{setting}
V:= \mathbb{F}_p ^2, ~~X:= V \times G_\delta ~~\text{and}~~ G_\delta^{\times}:= G_\delta \setminus \{ 1 \},
\end{equation}
where $p$ is prime and $\delta \leq p+2$.
We denote by $u_a$ an element $(u, a)$ of $X$ for short.
We denote by $P(V)$ the set of $p+1$ subgroups of $V$ with order $p$.
We take an injective mapping $L$ from $G_\delta^{\times}$ to $P(V)$ by
\begin{equation}\label{left-mapping}
a \mapsto L_a.
\end{equation}

We denote the image of $L$ by $\{ L_a \mid a \in G_\delta^{\times}\}$.
Depending on $\{ L_a \mid a \in G_\delta^{\times} \}$,
we take a mapping $C$ from $G_\delta^{\times}$ to $P(V)$ by
\begin{equation}\label{central-mapping}
a \mapsto C_a
\end{equation}
such that $L_a \neq C_a = C_{a^{-1}}$ for $a \in G_\delta^{\times}$.

\vskip5pt
By (\ref{left-mapping}) and (\ref{central-mapping}), we choose two sets
$\{ L_a \mid a \in G_\delta^{\times} \}$ and $\{ C_a \mid a \in G_\delta^{\times} \}$.
We will construct an association scheme such that (\ref{A}), (\ref{B}) and (\ref{con-three}) hold.

\vskip5pt
Define binary relations on $X$ as follows:
\begin{equation}\label{relation1}
h_w = \{ (u_a, v_a) \mid u - v = w, a \in G_\delta \} ~~~\text{for each}~ w \in V;
\end{equation}
\begin{equation}\label{relation2}
t_a =  \left\{(u_b, v_c) \mid b^{-1}c = a, (u, v) \in \bigcup_{x \in C_a} (L_a+x)  \times (L_{a^{-1}}+x) \right\} ~~~\text{for each}~ a \in G_\delta^{\times};
\end{equation}
\begin{equation}\label{relation3}
t_1 = 1_X.
\end{equation}
Note that $h_{w_1} \cdot t_a = t_a = t_a \cdot h_{w_2}$ for $w_1 \in L_a, w_2 \in L_{a^{-1}}$.

\begin{thm}\label{thm:main-as1}
Let $\{ L_a \mid a \in G_\delta^{\times} \}$ and $\{ C_a \mid a \in G_\delta^{\times} \}$ be two sets given in $(\ref{left-mapping})$ and $(\ref{central-mapping})$.
For $w \in V, a \in G_\delta$, let $h_w, t_a$ be binary relations given in $\mathrm{(\ref{relation1})}$, $\mathrm{(\ref{relation2})}$ and $\mathrm{(\ref{relation3})}$.
Put $S =  \{t_a \cdot h_w \mid w \in V, a \in G_\delta \}$.
Then $S$ is an association scheme on $X$.
\end{thm}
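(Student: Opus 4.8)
The plan is to verify the three scheme axioms for $S=\{t_a\cdot h_w \mid w\in V,\ a\in G_\delta\}$ directly from the definitions in $(\ref{relation1})$--$(\ref{relation3})$, but to organize the work so that the bulk is handled by the machinery of Proposition \ref{thm:conSchur}. First I would take $G$ to be the regular representation of the group $V \rtimes G_\delta$ (or simply $V \times G_\delta$ acting on $X$ by translation in the first coordinate together with right multiplication in the second), and let $H$ correspond to the thin part coming from $V$, so that $\widetilde{H}=\{h_w \mid w\in V\}$ and $G/H \cong G_\delta$. With this identification the relations $t_a$ are exactly candidates for the $t_\sigma$ of that proposition, and the goal reduces to checking the three hypotheses $(\ref{partition})$, $(\ref{trans})$, $(\ref{product})$.

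The first step is $(\ref{partition})$: I would use the remark following $(\ref{relation3})$, namely $h_{w_1}\cdot t_a = t_a = t_a \cdot h_{w_2}$ for $w_1\in L_a$, $w_2\in L_{a^{-1}}$, to show that the sets $\{h_w \cdot t_a \mid w\in V\}$ and $\{t_a \cdot h_w \mid w\in V\}$ each have exactly $p$ distinct members (indexed by cosets of $L_a$, resp. $L_{a^{-1}}$, in $V$) and that together they partition $\bigcup_{\tau}x^{H\tau}\times x^{H a\tau}$. The second step is $(\ref{trans})$: I would check $t_a^\ast = t_{a^{-1}}$ by a direct coordinate computation, where the symmetry hypothesis $C_a = C_{a^{-1}}$ from $(\ref{central-mapping})$ is exactly what makes the union $\bigcup_{x\in C_a}(L_a+x)\times(L_{a^{-1}}+x)$ transpose into $\bigcup_{x\in C_a}(L_{a^{-1}}+x)\times(L_a+x)$, i.e. into the defining set of $t_{a^{-1}}$.

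The third step, verifying $(\ref{product})$, is where the real content lies and where I expect the main obstacle. Here I must compute the adjacency-matrix products $A_{t_a}A_{t_b}$. For $ab=1$ I would show $A_{t_a}A_{t_{a^{-1}}}$ is supported on $\widetilde{H}$ and picks out exactly the stabilizing subset $K_a=\{h_w \mid w\in L_a\}$ with the correct multiplicity $|K_a|=p$; this amounts to counting, for fixed endpoints in the same $G_\delta$-fiber, the number of intermediate points, which is controlled by the intersection pattern of the affine lines $L_a+x$ and $L_{a^{-1}}+x$. For $ab\neq 1$ the hard part is to prove that the product is a uniform multiple of $\sum A_s$ over the $p$ translates $\{t_{ab}\cdot h_w\}$, with a constant $n_{a,b}$ independent of the chosen endpoints. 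The key geometric input is that the three relevant subgroups $L_a$, $L_b$, and $C_{ab}$ (together with $L_{ab}$, $L_{(ab)^{-1}}$) behave like distinct points of the projective line $P(V)$, so that any two of them intersect only in $0$; the injectivity of $L$ in $(\ref{left-mapping})$ and the condition $L_a\neq C_a$ in $(\ref{central-mapping})$ are precisely what guarantee this genericity and force the relevant line-intersection counts to be constant. The bound $\delta\le p+2$ enters through the need to embed $G_\delta^\times$ together with the central data into the $p+1$ points of $P(V)$.

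Once $(\ref{partition})$, $(\ref{trans})$ and $(\ref{product})$ are in hand, the conclusion follows immediately from Proposition \ref{thm:conSchur}, since that proposition asserts exactly that $S=\{\tilde h\cdot t_\sigma\}$ is then an association scheme; I would only need to note that the indexing $\{t_a\cdot h_w\}$ used here coincides with $\{\tilde h \cdot t_\sigma\}$ via the coset remark above. I therefore anticipate that the proof in the paper devotes almost all of its length to the case-by-case intersection-number computation underlying $(\ref{product})$, and comparatively little to the formal closure and transpose axioms.
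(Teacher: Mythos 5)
Your proposal is correct and follows essentially the same route as the paper: the paper likewise reduces everything to Proposition \ref{thm:conSchur} and then verifies $(\ref{partition})$, $(\ref{trans})$ and $(\ref{product})$ through a sequence of claims, with the transpose condition resting on $C_a=C_{a^{-1}}$ and the product condition resting on the trivial pairwise intersections of the distinct order-$p$ subgroups $L_a$, $L_{a^{-1}}$, $L_d$, $C_a$ of $V$, exactly as you anticipate.
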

\begin{proof}
By Proposition \ref{thm:conSchur},
it suffices to check (\ref{partition}), (\ref{trans}) and (\ref{product}).

\vskip5pt
\textbf{Claim 1}: For each $a \in G_\delta^{\times}$, $\left\{ (u, v) + \bigcup_{x \in C_a} (L_a +x) \times (L_{a^{-1}} +x) \mid u, v \in V \right\}$
is a partition of $V \times V$.

Since $L_a + C_a = V = L_{a^{-1}} + C_a$, we have
\begin{eqnarray*}
(u, v) + \bigcup_{x \in C_a} (L_a +x) \times (L_{a^{-1}} +x) & = &(u, 0) + (0, v) + \bigcup_{x \in C_a} (L_a + x) \times (L_{a^{-1}} + x) \\
& = & \bigcup_{x \in C_a} (L_a +x + y_1) \times (L_{a^{-1}} +x + y_2)
\end{eqnarray*}
for some $y_1, y_2 \in C_a$.

%Also, $\bigcup_{x \in C_a} (L_a + x+ y_1) \times (L_{a^{-1}} + x + y_2)$ is always represented by the form of
%$\bigcup_{x \in C_a} (L_a + x) \times (L_{a^{-1}} + x + y_3)$ for some $y_3 \in C_a$.

Assume
\[\bigcup_{x \in C_a} (L_a + x + y_1) \times (L_{a^{-1}} + x + y_2) \cap \bigcup_{x \in C_a} (L_a + x +y_1') \times (L_{a^{-1}} + x + y_2') \neq \emptyset.\]

Then $l_1 + x + y_1 = l_1' + x +y_1'$ and $l_2 + x + y_2 = l_2' + x +y_2'$ for some $l_1, l_1' \in L_a, l_2, l_2' \in L_{a^{-1}}$.
Since $l_1 - l_1' = y_1' - y_1 =0$ and $l_2 - l_2' = y_2' =y_2 = 0$, we have $y_1 = y_1'$ and $y_2 = y_2'$.
This completes the proof of Claim $1$.

\vskip5pt
\textbf{Claim 2}: $S$ is a partition of $X \times X$.

It is easy to see that $\{t_1 \cdot h_w \mid w \in V \}$ is a partition of $\bigcup_{a \in G_\delta} (V \times \{ a \}) \times (V \times \{ a \})$.
It suffices to verify that $\{t_a \cdot h_w \mid w \in V, a \in G_\delta^{\times} \}$ is a partition of
\[(X \times X) \setminus \bigcup_{a \in G_\delta} (V \times \{ a \}) \times (V \times \{ a \}).\]
Since \[t_a = \bigcup_{b^{-1}c=a} \left\{ (u_b, v_c) \mid (u,v) \in \bigcup_{x \in C_a} (L_a +x) \times (L_{a^{-1}} +x) \right\},\]
it follows from Claim $1$ that $\{t_a \cdot h_w \mid w \in V\}$ is a partition of
\[\{(u_b, v_c) \mid u, v \in V, b^{-1}c = a \},\]
for each $a \in G_\delta^{\times}$.
This implies that $S$ is a partition of $X \times X$.

\vskip5pt
\textbf{Claim 3}: $\{t_a \cdot h_w \mid w \in V\} = \{ h_w \cdot t_a \mid w \in V\}$ for each $a \in G_\delta^{\times}$.

Put $w = w' + w'' \in L_{a^{-1}} + C_a$. Then we have
\begin{eqnarray*}
t_a \cdot h_w & = &  \left\{(u_b, v_c) \mid b^{-1}c = a, (u, v) \in \bigcup_{x \in C_a} (L_a+x)  \times (L_{a^{-1}}+x) \right\} \cdot h_w \\
              & = &  \left\{(u_b, v_c - w_c) \mid b^{-1}c = a, (u, v) \in \bigcup_{x \in C_a} (L_a+x)  \times (L_{a^{-1}}+x) \right\} \\
              & = &  \left\{(u_b, z_c) \mid b^{-1}c = a, (u, z) \in \bigcup_{x \in C_a} (L_a+x)  \times (L_{a^{-1}}+x-w) \right\} \\
              & = &  \left\{(u_b, z_c) \mid b^{-1}c = a, (u, z) \in \bigcup_{x \in C_a} (L_a+x)  \times (L_{a^{-1}}+x-w'') \right\} \\
              & = &  \left\{(u_b, z_c) \mid b^{-1}c = a, (u, z) \in \bigcup_{x \in C_a} (L_a+ w'' +x -w'')  \times (L_{a^{-1}}+x-w'') \right\} \\
              & = &  \left\{(v_b + w''_b, z_c) \mid b^{-1}c = a, (v, z) \in \bigcup_{x \in C_a} (L_a +x -w'')  \times (L_{a^{-1}}+x-w'') \right\} \\
              & = &  h_{w''} \cdot \left\{(v_b, z_c) \mid b^{-1}c = a, (v, z) \in \bigcup_{x \in C_a} (L_a +x)  \times (L_{a^{-1}}+x) \right\} = h_{w''} \cdot t_a.
\end{eqnarray*}

\vskip5pt
\textbf{Claim 4}: $t_a^\ast = t_{a^{-1}}$ for each $a \in G_\delta^{\times}$.

By the definition of $t_a$, we have
\begin{eqnarray*}
t_a^\ast & = & \left\{(v_c, u_b) \mid b^{-1}c = a, (u, v) \in \bigcup_{x \in C_a} (L_a +x) \times (L_{a^{-1}} +x) \right\}  \\
         & = & \left\{(v_c, u_b) \mid c^{-1}b = a^{-1}, (v, u) \in \bigcup_{x \in C_a} (L_{a^{-1}} +x) \times (L_a +x) \right\}.
\end{eqnarray*}
Since $C_a = C_{a^{-1}}$, we have $t_a^\ast = t_{a^{-1}}$.

\vskip5pt
\textbf{Claim 5}: $A_{t_a} A_{t_{a^{-1}}} = p \sum_{w \in L_a} A_{h_w}$ for each $a \in G_\delta^{\times}$.

Since $t_a^\ast = t_{a^{-1}}$,
we have
\[t_a \cdot t_{a^{-1}} = \left\{(u_b, w_b) \mid  b \in G_\delta,  (u, w) \in \bigcup_{x \in C_a} (L_a +x)  \times (L_a +x) \right\}.\]
Thus, $t_a \cdot t_{a^{-1}} = \bigcup_{w \in L_a} h_w$.
For each $(u_b, w_b) \in \bigcup_{w \in L_a} h_w$, we have
\[u_b t_a \cap w_b t_a = \{ v_{ba} \mid v \in L_a^{-1} + x \}\]
for some $x \in C_a$.
This completes the proof of Claim $5$.

\vskip5pt
\textbf{Claim 6}: $A_{t_a} A_{t_d} =  \sum_{w \in L_{ad} } A_{t_{ad} \cdot h_w}$ if $a^{-1} \neq d$ for $a, d \in G_\delta^{\times}$.

Put $t_d = \left\{ (v_c, w_e) \mid c^{-1}e = d, (v, w) \in \bigcup_{x \in C_d} (L_d+x)  \times (L_{d^{-1}}+x) \right\}$.
Clearly, $t_a \cdot t_d \subseteq t_{ad} \cdot \widetilde{L}_{ad}$.
For each $(u_b, w_e) \in t_{ad} \cdot \widetilde{L}_{ad}$, we have $|u_b t_a \cap w_e t_d^\ast|=1$,
since $|(L_{a^{-1}}+x) \cap (L_d+y)|=1$ for each $x \in C_a, y \in C_d$.

\vskip5pt
The above claims complete the proof.
\end{proof}

%%%%%%%%%%%%%%%%%%%%%%%%%%%%%%%%%%%%%%%%%%%%%%%%%%%%%%%%%%%%%%%%%%%%%%%%%%%%%%%%%%%%%%%%%%%%%%%%%%%%%%%%%%%%%%

Now we show that
there exists a unique association scheme such that (\ref{A}) and (\ref{B}) hold and $\delta(S)=3$, up to isomorphism.

\begin{thm}\label{thm:del3-unique}
Let $S_i$ be an association scheme on $X_i$ $(i=1,2)$ such that $\mathrm{(\ref{A})}$ and $\mathrm{(\ref{B})}$ hold and $\delta(S) = 3$.
Then $S_1$ is isomorphic to $S_2$.
\end{thm}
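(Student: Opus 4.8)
The plan is to show that each $S_i$ is isomorphic to a single normal form built on $V=\mathbb{F}_p^2$, determined by a configuration of three lines, and then to collapse all such configurations using the $3$-transitivity of $\mathrm{PGL}(2,p)$ on the projective line $P(V)$. Fix $i$ and write $S=S_i$, $X=X_i$. Since $n_{\Or(S)}=p^2$ and $\delta(S)=3$, we have $|X|=n_S=3p^2$ and the factor scheme $S\Qd\Or(S)$ is thin of order $3$, hence isomorphic to $C_3=\{1,a,a^{-1}\}$ with $a\neq a^{-1}$. Thus $X$ splits into three cosets $B_0,B_1,B_2$ of $\Or(S)$, each of size $p^2$ and permuted cyclically; choosing base points I identify each $B_j$ with $V$ so that $\Or(S)$ acts by translation, and I identify $\Or(S)$ itself with $V$. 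The relation set is then $\Or(S)\cup s_0\Or(S)\cup s_0^{\ast}\Or(S)$, where each nonidentity coset consists of $p$ relations of valency $p$ linking consecutive blocks.

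First I would record that condition $(\ref{con-three})$ is automatic here. Put $N=s_0s_0^{\ast}$ and $M=s_0^{\ast}s_0$; since $s_0$ has valency $p$, each is a thin closed subset of $\Or(S)$ of order $p$ (using $|ss^{\ast}|=n_s$), that is, a line of $V$. For every $s\in s_0\Or(S)$ one has $ss^{\ast}=s_0s_0^{\ast}=N$ and for every $s\in s_0^{\ast}\Or(S)$ one has $ss^{\ast}=M$, while $ss^{\ast}=\{1_X\}$ for $s\in\Or(S)$. Hence by $(\ref{thinresidue})$, $\Or(S)=\gn{N\cup M}$. If $N=M$ this group would have order $p$, contradicting $(\ref{A})$; therefore $N\neq M$, the two lines are distinct, $\gn{N\cup M}=V$, and $(\ref{con-three})$ holds for $S$.

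Next I would pin down the shape of $s_0$. A short computation with $A_{s_0}A_{s_0^{\ast}}$ shows that the left and right translation stabilizers of $s_0$ are exactly $N$ and $M$, so the $B_0\times B_1$ part of $s_0$ is invariant under translating the first coordinate by $N$ and the second by $M$; as it has valency $p=|M|$, each fiber is a single coset of $M$, and $s_0$ descends to the graph of a bijection $\phi\colon V/N\to V/M$. The crux of the proof is to upgrade $\phi$ to an affine-linear map, equivalently to produce a line $C\notin\{N,M\}$ with $\phi=\pi_M|_C\circ(\pi_N|_C)^{-1}$, where $\pi_N,\pi_M$ are the quotient maps, so that $s_0$ acquires exactly the form $(\ref{relation2})$. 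This is forced by the remaining intersection-number constraints: the single relation $s_0$ simultaneously carries the three block transitions $B_0\to B_1\to B_2\to B_0$, and constancy of the structure constants $c_{s_0s_0}^{t}$, which compose two $a$-steps into the $a^{-1}$-coset, ties these three bijections together and rules out any nonlinear $\phi$. I expect this rigidity argument to be the main obstacle, since a priori an arbitrary bijection $\phi$ already satisfies the one- and two-block identities (for instance $A_{s_0}A_{s_0^{\ast}}=p\sum_{n\in N}A_n$), and only the global three-block consistency removes the spurious solutions.

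Finally, once each $S_i$ is identified with the normal form attached to an ordered triple of distinct lines $(N_i,M_i,C_i)$ of $V$, equivalently three distinct points of $P(V)=\mathbb{P}^1(\mathbb{F}_p)$, the isomorphism is immediate. Because $\mathrm{PGL}(2,p)$ acts $3$-transitively on $P(V)$, there is $g\in GL(V)$ carrying $(N_1,M_1,C_1)$ to $(N_2,M_2,C_2)$. Letting $g$ act on $V$ in each block and keeping the block labels in $C_3$ fixed yields a bijection $X_1\to X_2$ that sends $\Or(S_1)$ to $\Or(S_2)$, since $g$ is an automorphism of the translation structure, and sends each between-block relation of $S_1$ to the corresponding one of $S_2$, since both are given by $(\ref{relation2})$ for the matched line data. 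Hence this map is an isomorphism of association schemes, and $S_1\simeq S_2$.
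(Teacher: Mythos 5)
Your overall architecture---reduce each $S_i$ to a normal form on $V=\mathbb{F}_p^2$ determined by an ordered triple of distinct lines, then use the sharp $3$-transitivity of $\mathrm{PGL}_2(p)$ on $P(V)$ to match the triples---is essentially the paper's proof: its three lines are $\langle u_k\rangle=s_k^{\ast}s_k$, $\langle v_k\rangle=t_k^{\ast}t_k$ and the centralizing line $\langle c_k\rangle$ (your $C$), and it too invokes $3$-transitivity to align them before writing down the point map. The problem is that you leave unproved the step you yourself call the crux: you assert that the coset bijection $\phi\colon V/N\to V/M$ must be induced by a third line $C$, i.e.\ that $s_0$ acquires the shape $(\ref{relation2})$, but you only ``expect'' this, and the mechanism you propose for forcing it does not work. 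The constancy of $c_{s_0s_0}^{t}$ across the three block transitions gives no information: for arbitrary coset bijections $\phi_{01},\phi_{12}$ one computes, for every $(x,z)\in B_0\times B_2$, that $xs_0$ is an $M$-coset and $zs_0^{\ast}\cap B_1$ is an $N$-coset, so $|xs_0\cap zs_0^{\ast}|=1$ identically; no nonlinear $\phi$ is excluded this way.

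The rigidity actually comes from the interaction of $s_0$ with the thin elements of $\Or(S)$, already at the level of a single block transition: for each $h\in\Or(S)$ the complex product $h\cdot s_0$ is a single relation of $S$ lying in the coset $s_0\Or(S)$, hence equals $s_0g$ for some $g$; on $B_0\times B_1$ this reads $\phi(w+\pi_N(h))=\phi(w)+\pi_M(g)$ for all $w$, so $h\mapsto g$ is a homomorphism and $\phi$ is affine. Its linear part is an isomorphism $\lambda\colon V/N\to V/M$, and every such isomorphism equals $\pi_M|_C\circ(\pi_N|_C)^{-1}$ for a unique line $C\notin\{N,M\}$, namely the kernel of $x\mapsto\lambda(\pi_N(x))-\pi_M(x)$; this $C$ is exactly the centralizing subgroup the paper exploits through the identity $c_ks_k=s_kc_k$ together with $s_ks_k^{\ast}s_k=s_k$ and $t_k^{\ast}t_ks_k=s_k$. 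You must also observe that the same $g$ serves all three block transitions (because $hs_0=s_0g$ is one equation on all of $X$), so a single $C$ works globally, and that the affine offsets are absorbed into the choice of base points using $1_X\in s_0^{3}$. With this step supplied your argument closes and is equivalent to the paper's; without it the proof is incomplete, and as written the stated source of the rigidity is the wrong one.
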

\begin{proof}
For each $s_k \in S_k \setminus \Or(S_k)$ $(k = 1, 2)$,
it follows from showing $t \in s_k^\ast s_k \Leftrightarrow s_kt=s_k$ that the set $s_k^\ast s_k$ is a subgroup of $\Or(S_k)$
(see \cite[Lemma 4.2]{hirasakameta}).
Since $\{s_k^\ast s_k \mid s_k \in S_k \setminus \Or(S_k)\}$ has exactly two elements as the subgroups $\Or(S_k)$,
we denote them by $\langle u_k \rangle$ and $\langle v_k \rangle$, respectively.
Note that there exists a unique subgroup $\langle c_k \rangle$ such that $c_k s_k = s_k c_k$ for each $s_k \in S_k$.

\vskip5pt
First of all, we choose $s_k, t_k \in S_k \setminus \Or(S_k)$ such that
\[s_k^\ast s_k = \langle u_k \rangle, ~~t_k^\ast t_k = \langle v_k \rangle \leq \Or(S_k)  ~~\text{and}~~  s_k^\ast = t_k.\]

We denote by $X_{k,l}$ an element of $X_k/\Or(S_k)$ $(l= 1, 2, 3)$.
Since $s_k^\ast \in s_k^2$, we have $1_{X_k} \in s_k^3$.
So, we can fix
\[x_l \in X_{1,l}, ~~y_l \in X_{2,l} ~(l= 1, 2, 3)~~\text{such that}\]
\[x_2 \in x_1 s_1, ~~x_3 \in x_2 s_1, ~~x_1 \in x_3 s_1 ~~\text{and}~~ y_2 \in y_1 s_2, ~~y_3 \in y_2 s_2, ~~y_1 \in y_3 s_2.\]

\vskip5pt
It is known that the automorphism group of $C_p \times C_p$ is isomorphic to $GL_2(p)$.
Note that $PGL_2(p)$ is sharply $3$-transitive of degree $p+1$ (see \cite{dixon}).
Thus, there exists an group isomorphism $\Phi$ from $\Or(S_1)$ to $\Or(S_2)$ such that $\Phi(\langle u_1 \rangle) = \langle u_2 \rangle$,
$\Phi(\langle v_1 \rangle) = \langle v_2 \rangle$ and $\Phi(\langle c_1 \rangle) = \langle c_2 \rangle$.

By replacing the generators if necessary, without loss of generality, we assume $\Phi(u_1) = u_2$, $\Phi(v_1) = v_2$ and $\Phi(c_1) = c_2$.

\vskip5pt
Define a bijective map $\phi$ from $X_1 \cup S_1$ to $X_2 \cup S_2$ as follows:
\[X_{1,l} \rightarrow X_{2,l} ~~\text{by}~~ x_lu_1^iv_1^j  \mapsto y_lu_2^iv_2^j  ~~(l = 1, 2, 3);\]
\[\Or(S_1) \rightarrow \Or(S_2) ~~\text{by}~~ u_1^iv_1^j \mapsto u_2^iv_2^j;\]
\[S_1 \setminus \Or(S_1) \rightarrow S_2 \setminus \Or(S_2) ~~\text{by}~~  s_1v_1^j \mapsto s_2v_2^j, u_1^jt_1 \mapsto u_2^jt_2.\]

\vskip10pt
Now we check that $\phi$ is an isomorphism.
For all $\alpha, \beta \in X_1$ and $h \in S_1$ with $(\alpha, \beta) \in h$, it suffices to show $(\phi(\alpha), \phi(\beta)) \in \phi(h)$.
First of all, we divide our consideration into two cases, i.e., $h \in \Or(S_1)$ and $h \in S_1 \setminus \Or(S_1)$.

\vskip5pt
In the case of $h \in \Or(S_1)$, without loss of generality, we assume that $(\alpha, \beta) \in h \cap (X_{1,1} \times X_{1,1})$.
Then
\[(x_1, \alpha) \in h_1 ~~\text{and}~~(x_1, \beta) \in h_2\]
for some $h_1, h_2 \in \Or(S_1)$.
This means $(\alpha, \beta) \in h_1^\ast h_2 = h$.
Since $(y_1, \phi(\alpha)) \in \phi(h_1)$ and $(y_1, \phi(\beta)) \in \phi(h_2)$,
we have
\[(\phi(\alpha), \phi(\beta)) \in \phi(h_1)^\ast \phi(h_2)  = \phi(h_1^\ast h_2) = \phi(h).\]

\vskip5pt
In the case of $h \in S_1 \setminus \Or(S_1)$, first of all, we assume that $(\alpha, \beta) \in h \cap (X_{1,1} \times X_{1,2})$.
Then
\[ (x_1,x_2) \in s_1, (x_1, \alpha) \in u_1^i v_1^j ~~\text{and}~~ (x_1, \beta) \in u_1^l v_1^m \]
for some $0 \leq i, j, l, m \leq p-1$.
This means $(\alpha, \beta) \in u_1^{p-i} v_1^{p-j} s_1 u_1^l v_1^m$.
Note that $s_k s_k^\ast s_k = s_k$ and $t_k^\ast t_k s_k = s_k$ $(k = 1, 2)$.
So, we have
\[u_1^{p-i} v_1^{p-j} s_1 u_1^l v_1^m = u_1^{p-i} s_1 v_1^m = s_1 v_1^{m+e}\]
for some $e$.
Since $(y_1, y_2) \in s_2$, $(y_1, \phi(\alpha)) \in u_2^i v_2^j$ and $(y_1, \phi(\beta)) \in u_2^l v_2^m$,
we have
\[(\phi(\alpha), \phi(\beta)) \in u_2^{p-i} v_2^{p-j} s_2 u_2^l v_2^m = u_2^{p-i} s_2 v_2^m.\]
The assumption that $\Phi(u_1) = u_2$, $\Phi(v_1) = v_2$ and $\Phi(c_1) = c_2$ implies $u_2^{p-i} s_2 v_2^m = s_2 v_2^{m+e}$.
Since $\phi(s_1 v_1^{m+e}) = s_2 v_2^{m+e}$, we have $(\phi(\alpha), \phi(\beta)) \in \phi(h)$.
\vskip5pt
We also assume that $(\alpha, \beta) \in h \cap (X_{1,2} \times X_{1,3})$ or $(\alpha, \beta) \in h \cap (X_{1,3} \times X_{1,1})$.
By the same argument, we have $(\phi(\alpha), \phi(\beta)) \in \phi(h)$.
\vskip5pt
The symmetry of $s_k$ and $t_k$ completes the proof.
\end{proof}

%%%%%%%%%%%%%%%%%%%%%%%%%%%%%%%%%%%%%%%%%%%%%%%%%%%%%%%%%%%%%%%%%%%%%%%%%%%%%%%%%%%%%%%%%%%%%%%%%%%%%%%%%%%%

\begin{thm}\label{thm:main1}
Let $S$ be an association scheme on $X$ such that $\mathrm{(\ref{A})}$, $\mathrm{(\ref{B})}$ and $\mathrm{(\ref{con-three})}$ hold and $\delta(S) = p+2$ is an odd prime.
Then it is non-schurian.
\end{thm}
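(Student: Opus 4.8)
The plan is to use the standard fact that $S$ is schurian if and only if $\autg(S)$ is transitive on $X$ and every basis relation of $S$ is a single $\autg(S)$-orbital: indeed, if $S=\mathcal R_G$ then $G\le\autg(S)$, and the $\autg(S)$-orbitals both refine $S$ (automorphisms fix relations) and coarsen those of $G$, hence equal $S$. So I would assume $S$ is presented as in Theorem \ref{thm:main-as1} (its intersection numbers, hence the lines $L_a$ and the centers $C_a$, are scheme invariants) and show that $\autg(S)$ fails this criterion, some relation of valency $p$ splitting into strictly smaller orbitals.

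First I would fix the coarse structure. By $(\ref{B})$ every element of $S\setminus\Or(S)$ has valency $p$, so the thin radical $\Ot(S)$ equals the thin residue $\Or(S)\cong C_p\times C_p=:V$; thus a point stabilizer fixes its whole fiber (coset of $\Or(S)$) pointwise. Writing $X$ as the disjoint union of the $p+2$ fibers and letting $N$ be the setwise stabilizer of one fiber, any realizing group would satisfy $G_x\lhd N\lhd G$ with $N/G_x\cong V$ regular on each fiber and $G/N\cong C_{p+2}$ regular on the set of fibers; as $N$ embeds in the product of the fibrewise translation groups it is elementary abelian, and $\gcd(p+2,p)=1$ forces $G=N\rtimes\langle g\rangle$ with $g$ of order $p+2$ cyclically shifting the fibers. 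By $(\ref{con-three})$ and \cite[Lemma 4.2]{hirasakameta} the subgroups $s_as_a^\ast=L_a\le V$ of order $p$ are pairwise distinct over the $p+1$ nonidentity cosets, so $a\mapsto L_a$ is a bijection from $G_\delta^{\times}$ onto the set $P(V)$ of lines of $V$, i.e.\ onto $PG(1,p)$.

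Next I would identify each fiber with $V$ through $\Or(S)$ and compute $\autg(S)$. Since any $\phi\in\autg(S)$ fixes each relation $h_w$, reading this fibrewise forces $\phi$ to act on every fiber by a pure translation (trivial linear part); hence the shift $g$ together with the fibrewise translations generate $\autg(S)=T^{*}\rtimes\langle g\rangle$, where $T^{*}\le V^{G_\delta}$ consists of the translation tuples $c=(c_b)$ satisfying the coherence relations forced by $(\ref{relation2})$, namely $\pi^{C}_a(c_b)=\pi^{C}_{a^{-1}}(c_{ba})$ for all $a\in G_\delta^{\times}$, $b\in G_\delta$, with $\pi^{C}_a$ the projection of $V$ onto $C_a$ along $L_a$. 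The orbital criterion now reads: $S$ is schurian if and only if the stabilizer $\autg(S)_x=\{c\in T^{*}:c_0=0\}$ surjects, under the fiber-$a$ coordinate, onto the line $L_{a^{-1}}$ for every $a$. Solving the coherence relations inside this stabilizer gives $c_k\in L_{g_0^{-k}}$ and a first–order recursion $\lambda_{k+1}=r(k)\lambda_k$ on $\mathbb Z/(p+2)$, with explicit $r(k)\in\mathbb F_p^{\times}$ built from $2\times2$ determinants of direction vectors of the $L_a$ and $C_a$, together with the analogous recursions coming from the other shifts. Thus $T^{*}$ is governed by a discrete $\mathbb F_p^{\times}$-connection on the $(p+2)$-cycle, and the stabilizer covers every line $L_{a^{-1}}$ exactly when this connection is flat (all of its cycle holonomies are trivial).

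The heart of the proof, and the step I expect to be the main obstacle, is to show that this connection cannot be flat when $\delta(S)=p+2$ is an odd prime. Concretely one closes the step-$1$ chain $\lambda_1\to\cdots\to\lambda_{p+1}$ by a single step-$2$ edge back to $\lambda_1$, producing a holonomy $H=\prod r(k)\in\mathbb F_p^{\times}$, and must prove $H\neq 1$ for every admissible bijection $a\mapsto L_a$ and every choice of centers $C_a=C_{a^{-1}}$. The mechanism I would aim for is that flatness would make the $C_{p+2}$-shift compatible, across all $p+1=|PG(1,p)|$ directions simultaneously, with the projective structure on $V$ — an order-$(p+2)$ compatibility unavailable over $\mathbb F_p$, since the prime $p+2$ divides none of $p$, $p-1$, $p+1$ and hence $p+2\nmid|PGL_2(p)|=p(p-1)(p+1)$ (cf.\ \cite{dixon}). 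Turning this heuristic into an exact evaluation of $H$ as a determinant/cross-ratio product that primality of $p+2$ forces to be nontrivial is the crux; once $H\neq 1$ is established, the stabilizer misses some line $L_{a^{-1}}$, so the valency-$p$ relation $t_a$ breaks into strictly smaller $\autg(S)$-orbitals, whence $\mathcal R_{\autg(S)}\neq S$ and $S$ is non-schurian.
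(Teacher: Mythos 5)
Your proposal does not close the argument: the step you yourself identify as ``the crux'' --- proving that the holonomy $H=\prod r(k)\in\mathbb F_p^{\times}$ is nontrivial for \emph{every} admissible choice of the bijection $a\mapsto L_a$ and of the centers $C_a=C_{a^{-1}}$ --- is left as a heuristic. The remark that $p+2\nmid|PGL_2(p)|$ does not by itself evaluate the cross-ratio/determinant product, and it is not clear that flatness of your connection would actually produce an element of order $p+2$ in $PGL_2(p)$; as written, this is a plan for a proof rather than a proof. There is also a secondary gap earlier: Theorem \ref{thm:main1} is stated for an arbitrary scheme satisfying (\ref{A}), (\ref{B}), (\ref{con-three}) with $\delta(S)=p+2$ prime, not only for the schemes built in Theorem \ref{thm:main-as1}. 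Your structural reduction recovers the fibers, the translations $h_w$, and the bijection $a\mapsto L_a=s_as_a^{\ast}$, but the specific ``diagonal'' shape $\bigcup_{x\in C_a}(L_a+x)\times(L_{a^{-1}}+x)$ of the valency-$p$ relations (hence the existence of the complementary subgroups $C_a$ and of your projections $\pi^{C}_a$) is an extra structural claim that you assert via ``intersection numbers are scheme invariants'' but do not derive.

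For comparison, the paper avoids all of this fibrewise computation. It assumes $S$ schurian with $G=\mathrm{Aut}(S)$, shows by the orbit--stabilizer theorem and the intersection numbers that $|G|=p^{3}(p+2)$ (two-point stabilizers are trivial), deduces from Sylow's theorem and primality of $p+2$ that the Sylow $p$-subgroup is normal, and then uses the $N/C$ theorem to conclude that $Z(G)$ is nontrivial. A nontrivial central automorphism produces a thin closed subset $C(S)$ of order $p$ with $cs=sc$ for all $s\in S$; but condition (\ref{con-three}) with $\delta(S)=p+2$ forces the $p+1$ subgroups $ss^{\ast}$ to exhaust all order-$p$ subgroups of $\Or(S)$, so some $s$ has $cs=s$ and $sc\neq s$, a contradiction. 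If you want to salvage your approach, you would need to carry out the exact holonomy evaluation; alternatively, note that your computation of $\mathrm{Aut}(S)$ as fibrewise translations extended by a $(p+2)$-cycle already gives $|\mathrm{Aut}(S)|$ dividing $p^{3}(p+2)$-type bounds, at which point the paper's center argument is the efficient way to finish.
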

\begin{proof}
Suppose that $S$ is schurian.
For convenience, we denote $\mathrm{Aut}(S)$ by $G$.

\vskip5pt
\textbf{Claim 1}: The order of $G$ is $p^3(p+2)$.

By the orbit-stabilizer property (see \cite[page 8]{dixon}), we have
$|G| =  |G_\alpha| |X|$ for a fixed $\alpha \in X$.
Since $|X|=p^2(p+2)$, it suffices to verify $|G_\alpha|=p$.

Let $r_1$ is an element of $S \setminus \Ot(S)$.
We fix an element $\beta$ of $X$ such that $\beta \in \alpha r_1$.
Under the assumption that $(X, S)$ is schurian,
we have
\[|G_\alpha| = |G_{\alpha, \beta}||\alpha r_1| = |G_{\alpha, \beta}|p\]
by the orbit-stabilizer property.
We shall show $|G_{\alpha, \beta}|=1$.

Let $\gamma$ be an element of $\alpha r_1$ such that $\gamma \in \beta t$ for some $t \in R(r_1) \setminus \{1_X\}$.
$G_{\alpha, \beta}$ fixes $\gamma$ since $c_{r_1 t}^{r_1} = 1$. Thus, $G_{\alpha, \beta}$ fixes each element of $\alpha r_1$.

We consider an arbitrary element $r_2 \in S \setminus \Ot(S)$ such that $r_1 \neq r_2$.
For $\delta \in \alpha r_2$, we assume that there exists $g \in G_{\alpha, \beta}$ such that $\delta^g \neq \delta$.
Then there exist $s_1, s_2 \in S$ such that $(\beta, \delta), (\beta, \delta^g) \in s_1$ and $(\gamma, \delta), (\gamma, \delta^g) \in s_2$.
This means $c_{s_1 s_2^\ast}^t \geq 2$ and $c_{r_2 s_1^\ast}^{r_1} \geq 2$.

We divide our consideration into the following cases.
\begin{enumerate}
\item If $r_1 \Or(S) = r_2 \Or(S)$, then $s_1, s_2 \in \Ot(S)$.
This contradicts $c_{s_1 s_2^\ast}^t = 1$.
Thus, $G_{\alpha, \beta}$ fixes each element of $\alpha r_2$. This implies that $G_{\alpha, \beta}$ fixes each element of $\alpha s$ with $r_1 \Or(S) = s \Or(S)$.
\item If $r_1 \Or(S) \neq r_2 \Or(S)$, then $s_1, s_2 \in S \setminus \Ot(S)$.
This contradicts $c_{r_2 s_1^\ast}^{r_1} =1$.
\end{enumerate}
Hence, we have $|G_{\alpha, \beta}|=1$.

\vskip5pt
\textbf{Claim 2}: The center $Z(G)$ is not trivial.

Let $P$ and $Q$ be a Sylow $p$-subgroup and a Sylow $(p+2)$-subgroup of $G$, respectively.
Since $p+2$ is prime,
we have $P \unlhd G$ by the Sylow theorem (the number $n_p(G)$ of Sylow $p$-subgroups of $G$ is a divisor of $|G|$, and $n_p(G) \equiv 1$ (mod $p$)).

\vskip5pt
If $P$ is non-abelian, then $|Z(P)|=p$.
Since $Z(P)$ is characteristic in $P$, we have $Z(P) \unlhd G$.
By N/C theorem (see \cite[page 41]{isaacs}),
\[G/C_G(Z(P)) = N_G(Z(P))/C_G(Z(P)) \simeq ~\text{a subgroup of}~ \mathrm{Aut}(Z(P)).\]
Since $\mathrm{Aut}(Z(P))$ is cyclic of order $p-1$, we have $|G/C_G(Z(P))|=1$ and hence $Z(P) \leq Z(G)$.

If $P$ is abelian, then, by N/C theorem,
\[G/C_G(P) = N_G(P)/C_G(P) \simeq ~\text{a subgroup of}~ \mathrm{Aut}(P).\]
Since $|\mathrm{Aut}(P)|$ is not divided by $p+2$, we have $|G/C_G(P)|=1$ and hence $P=Z(G)$.

\vskip15pt
\textbf{Claim 3}: There exists a thin closed subset $C(S)$ such that
$n_{C(S)} = p$ and $cs =sc$ for all $s \in S, c \in C(S)$.

By Claim 2, there exists $z \in Z(G)$ such that $\alpha \neq \alpha^z$ for some $\alpha \in X$.
Since $zg = gz$ for each $g \in G_\alpha$, we have $(\alpha^z)^{G_\alpha} = \alpha^z$.
This implies that $r(\alpha, \alpha^z)$ is a thin element of $S$.

Since $G$ is transitive on $X$, each element $s$ of $S$ can be represented by $r(\alpha, \alpha^g)$ for some $g \in G$.
Since $r(\alpha, \alpha^z)$ is thin, $r(\alpha, \alpha^z)r(\alpha, \alpha^g)$ and $r(\alpha, \alpha^g)r(\alpha, \alpha^z)$ are singletons.
We check the followings:
\begin{enumerate}
\item $r(\alpha, \alpha^z) r(\alpha, \alpha^g) = r(\alpha, \alpha^z) r(\alpha^z, \alpha^{gz}) = r(\alpha, \alpha^{gz})$;
\item $r(\alpha, \alpha^g) r(\alpha, \alpha^z) = r(\alpha, \alpha^g) r(\alpha^g, \alpha^{zg}) = r(\alpha, \alpha^{zg})$.
\end{enumerate}
Since $zg = gz$, we have $r(\alpha, \alpha^{gz}) = r(\alpha, \alpha^{zg})$.

\vskip5pt
On the other hand, since $\delta(S) = p+2$, $\{ ss^\ast \mid s \in S \setminus \Or(S) \}$ consists of $p+1$ nontrivial subgroups of $\Or(S)$.
So, there exists an element $s \in S$ such that $cs = s$ and $sc \neq s$ for all $c \in C(S)$.
This contradicts Claim $3$.
\end{proof}

%%%%%%%%%%%%%%%%%%%%%%%%%%%%%%%%%%%%%%%%%%%%%%%%%%%%%%%%%%%%%%%%%%%%%%%%%%%%%%%%%%%%%%%%%%%%%%%%%%%

By a parallel argument of \cite[Section 4]{kim1},
we can construt a non-schurian association scheme which is algebraically isomorphic to a given schurian association scheme.

\begin{prop}\label{schurian-non}
Suppose that $S$ is a schurian association scheme satisfying $(\ref{A})$, $(\ref{B})$, $(\ref{con-three})$ and $S//\Or(S) \cong C_n$ for some
$4 \leq n \leq p+1$.
Then there exists a non-schurian association scheme with the same intersection numbers as $S$.
\end{prop}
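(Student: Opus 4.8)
The plan is to keep the subgroup data $\{L_a \mid a \in G_\delta^{\times}\}$ of Theorem~\ref{thm:main-as1} fixed and to perturb only the way in which the $L_a$-cosets are matched with the $L_{a^{-1}}$-cosets inside each relation $t_a$, exactly as in \cite[Section 4]{kim1}. Concretely, I would first exhibit $S$ in the coordinates $X = V \times G_\delta$ with $G_\delta \cong C_n$ so that its relations take the form $(\ref{relation1})$--$(\ref{relation3})$; note that the hypothesis $4 \le n \le p+1$ places $S$ precisely in the range where such a schurian model exists, the case $n = p+2$ being excluded because those schemes are always non-schurian by Theorem~\ref{thm:main1}. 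In $(\ref{relation2})$ the coset $L_a + x$ is paired with $L_{a^{-1}} + x$ through the common parameter $x \in C_a$, i.e.\ by the \emph{affine} bijection $V/L_a \to V/L_{a^{-1}}$. I would replace this by $\bigcup_{x \in C_a}(L_a + x) \times (L_{a^{-1}} + \phi_a(x))$ for a system of bijections $\phi_a$ of $C_a$ satisfying $\phi_{a^{-1}} = \phi_a^{-1}$ and the associativity (cocycle) relation forced by $(\ref{product})$, chosen to be non-affine for at least one $a$. Since $n \ge 4$ there are at least three nontrivial relations, which is what leaves room for such a non-affine but consistent choice; for $n = 3$, Theorem~\ref{thm:del3-unique} shows that no distinct algebraic twin exists at all.

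Granting that the twisted object $S'$ is an association scheme, it has the same intersection numbers as $S$. I would verify that $S'$ is a scheme by re-running the argument behind Proposition~\ref{thm:conSchur} and Theorem~\ref{thm:main-as1}, the only genuinely new point being that the chosen $\{\phi_a\}$ still satisfy $(\ref{product})$. For the algebraic isomorphism, the computations in Claims 5 and 6 of Theorem~\ref{thm:main-as1} give $c_{t_a t_{a^{-1}}}^{h_w} = p$ for $w \in L_a$ and $c_{t_a t_d}^{t_{ad} h_w} = 1$ for $a^{-1} \ne d$, and every such value depends only on the subgroup assignment $a \mapsto L_a$ together with the fact that $|(L_{a^{-1}} + x) \cap (L_d + y)| = 1$ whenever $L_{a^{-1}} \ne L_d$. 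None of these quantities is affected by replacing the affine matchings by $\{\phi_a\}$, so the bijection $t_a h_w \mapsto t_a' h_w$, $h_w \mapsto h_w$ is an algebraic isomorphism of $S$ onto $S'$.

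The main work, and the step I expect to be the real obstacle, is to show that $S'$ is non-schurian. Here I would argue by contradiction in the spirit of Theorem~\ref{thm:main1}: assuming $S' = \mathcal{R}_{G'}$ with $G' = \autg(S')$ transitive, I would analyze a point stabilizer $G'_\alpha$ and the induced action on the fibers $X/\Or(S')$. The thin residue forces $\Or(S')$ to act regularly within each fiber, and the transitivity of $G'$ together with the relations $t_a'$ propagates this action across the cyclic quotient $C_n$; the key claim is that any group realizing all of the $t_a'$ simultaneously must induce, on each matched pair of fibers, an $\mathbb{F}_p$-affine bijection $V/L_a \to V/L_{a^{-1}}$, since the stabilizer acts linearly and the matchings are pinned down by composition along $C_n$. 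This would contradict the non-affinity of at least one $\phi_a$. The delicate point is turning ``the matching is forced to be affine'' into a clean statement: one must use the cyclic structure and the hypothesis $n \ge 4$ to show that the cocycle determined by $G'$ must coincide with an affine one, so that the non-affine twist cannot be produced by any transitive group, whence $S'$ is non-schurian.
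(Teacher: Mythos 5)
Your overall strategy is the one the paper intends: the paper in fact \emph{omits} the proof of Proposition~\ref{schurian-non} and, in the remark following it, simply points to the twisting construction of \cite[Section 4]{kim1}, adapted from $\delta(S)=p$ to a cyclic quotient $C_n$ with $4\le n\le p+1$. So in outline you are aligned with the paper: fix the subgroup data, perturb the coset matchings inside the relations $t_a$, check that the intersection numbers are unchanged, and show the twisted scheme is non-schurian. You also correctly locate where the hypotheses enter ($n\ge 4$ to leave room for a twist; $n\le p+1$ because $n=p+2$ is already covered by Theorem~\ref{thm:main1}).

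As written, though, the proposal has genuine gaps at exactly the points that carry the mathematical content. First, you assume without justification that the given schurian $S$ can be exhibited in the coordinates of Theorem~\ref{thm:main-as1}, i.e.\ that its relations have the affine block structure of $(\ref{relation2})$; the proposition concerns an \emph{arbitrary} schurian $S$ satisfying $(\ref{A})$, $(\ref{B})$, $(\ref{con-three})$ with $S\Qd\Or(S)\cong C_n$, and without this normalization your twisted scheme $S'$ is algebraically isomorphic to the constructed model, not necessarily to the given $S$. Second, the existence of a system $\{\phi_a\}$ that is simultaneously compatible with the product condition $(\ref{product})$ and non-affine for at least one $a$ is asserted rather than produced; the sentence ``since $n\ge 4$ there are at least three nontrivial relations, which leaves room'' is precisely the place where the hypothesis $\delta(S)\ge 4$ must do actual work, and it requires an explicit construction (this is what fails for $n=3$, consistently with Theorem~\ref{thm:del3-unique}). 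Third, and decisively, the non-schurity of $S'$ is not established: you explicitly defer the key claim that any transitive automorphism group forces the matchings to be affine, calling it ``the real obstacle'' and ``the delicate point.'' That claim is the substance of the proposition---it is what the cited argument of \cite[Section 4]{kim1} must supply---and a proof whose crucial step is flagged as an unresolved obstacle is a plan rather than a proof. To complete the argument you would need to carry out that automorphism-group analysis (or adapt the one in \cite{kim1}) for the cyclic quotient $C_n$.
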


\begin{rem}
We omit the proof of Proposition \ref{schurian-non} and mention the following.
In \cite[Section 4]{kim1}, it is considered a schurian association scheme satisfying $(\ref{A})$, $(\ref{B})$, $(\ref{con-three})$ and $\delta(S) = p \geq 5$.
To follow the idea of \cite[Section 4]{kim1}, it suffices to consider the condition that $S//\Or(S)$ is cyclic, and
the proof of \cite[Theorem 1.2]{kim1} uses the fact that $\delta(S)$ is at least $4$.
%So, Proposition \ref{schurian-non} follows the idea given in \cite[Section 4]{kim1}.
\end{rem}

\begin{thm}\label{thm:non-schur}
Let $p$ be an odd prime. Then for each $4 \leq n \leq p+2$,
there exists a non-schurian association scheme satisfying $(\ref{A})$, $(\ref{B})$, $(\ref{con-three})$ and $\delta(S)=n$.
\end{thm}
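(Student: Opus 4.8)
The plan is to realize the non-schurian examples inside the very family produced by Theorem \ref{thm:main-as1}. Fix $n$ with $4\le n\le p+2$ and take $G_\delta=C_n$. For any admissible pair of maps $L,C$ as in $(\ref{left-mapping})$ and $(\ref{central-mapping})$, Theorem \ref{thm:main-as1} yields a scheme $S$ on $X=V\times C_n$ satisfying $(\ref{A})$, $(\ref{B})$ and $(\ref{con-three})$ with $\delta(S)=n$ and $S//\Or(S)\cong C_n$. Thus existence is automatic, and the entire content is to choose $L,C$ so that $S$ is \emph{not} schurian. I would prove non-schurity by pinning down $\autg(S)$ exactly and observing that it is too small to have $S$ as its orbital scheme.

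The key point is that $S$ always admits an abelian regular group of automorphisms. Writing $v_a$ for $(v,a)$ as in the paper, consider the fibrewise translations $\tau_w\colon v_a\mapsto (v+w)_a$ for $w\in V$ and the fibre shift $\rho\colon v_a\mapsto v_{a\gamma}$, where $\gamma$ generates $C_n$. A direct check on the defining relations $(\ref{relation1})$ and $(\ref{relation2})$ shows that $\tau_w$ and $\rho$ fix every $h_w$ and every $t_a$ (for $t_a$ one uses that the incidence condition involves only the $C_a$-component of the two $V$-coordinates, which is unchanged by $\tau_w$ and ignored by $\rho$). Hence $G_0:=\langle \tau_w\ (w\in V),\ \rho\rangle\le\autg(S)$, and since $G_0\cong V\times C_n$ acts simply transitively on $X$ it is regular, so its orbital scheme $\widetilde{G_0}$ is thin and therefore differs from the non-thin $S$.

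The main step is to show that, for a suitable choice of $L$ and $C$, one has $\autg(S)=G_0$. Let $\phi\in\autg(S)$ fix the base point $0_1$ (the origin of the identity fibre). Preservation of the relations $h_w$ forces $\phi$ to act as a translation $v_a\mapsto(v+c_a)_a$ on each fibre, with $c_1=0$; preservation of $t_a$ between the base fibre and the fibre $a$ then forces $c_a\in L_{a^{-1}}$, since $0_1t_a=\{v_a\mid v\in L_{a^{-1}}\}$. The remaining incidences of $t_\gamma$ between consecutive fibres $a$ and $a\gamma$ couple $c_a$ and $c_{a\gamma}$ through fixed linear isomorphisms between the relevant lines $L_{(\cdot)^{-1}}$ and $C_\gamma$. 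Following these couplings once around the cyclic group $C_n$ produces a holonomy endomorphism of a line whose value is a fixed product of ratios determined by the chosen lines; arranging $L,C$ in general position makes this product different from the identity, which forces every $c_a=0$ and hence $\phi=\mathrm{id}$. Thus $\autg(S)_{0_1}=1$, so $\autg(S)=G_0$ is regular and $S$ is non-schurian.

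This argument is uniform in $4\le n\le p+2$, so it settles the theorem; moreover when $n=p+2$ is prime no genericity hypothesis is needed, since Theorem \ref{thm:main1} already declares \emph{every} such $S$ non-schurian. When a schurian scheme with these parameters happens to exist (for instance when $C_n$ embeds in $GL_2(p)$, e.g.\ $n\mid p-1$ or $n\mid p+1$), Proposition \ref{schurian-non} furnishes an alternative non-schurian scheme with the same intersection numbers. The principal obstacle is the rigidity computation of the third paragraph: one must verify that the holonomy around $C_n$ can be made nontrivial by a choice among the $p+1$ available lines for \emph{every} $n$ in the range, thereby excluding extra fibrewise-translation automorphisms. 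This is exactly the place where the general position of $\{L_a\mid a\in G_\delta^{\times}\}$ and $\{C_a\mid a\in G_\delta^{\times}\}$ must be exploited.
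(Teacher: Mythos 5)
Your opening move (existence via Theorem \ref{thm:main-as1} with $G_\delta=C_n$) matches the paper, but the core of your non-schurity argument has two problems. First, the fibrewise translations $\tau_{w'}\colon v_a\mapsto (v+w')_a$ are \emph{not} automorphisms of $S$: they fix every $h_w$, but they do not fix $t_a$. Writing $w'=l_1+x_1=l_2+x_2$ with $l_1\in L_a$, $l_2\in L_{a^{-1}}$ and $x_1,x_2\in C_a$, a pair $(u,v)\in (L_a+x)\times(L_{a^{-1}}+x)$ is sent to $(L_a+x+x_1)\times(L_{a^{-1}}+x+x_2)$, so $\tau_{w'}(t_a)=t_a\cdot h_{x_1-x_2}$, and $x_1=x_2$ holds only when $w'\in C_a$. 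Hence $\tau_{w'}$ merely permutes the $p$ relations in $\{t_a\cdot h_w\mid w\in V\}$ and lies in $\autg(S)$ only for $w'\in\bigcap_{a}C_a$. Your claimed regular subgroup $G_0\cong V\times C_n$ therefore does not sit inside $\autg(S)$; the parenthetical ``the incidence condition involves only the $C_a$-component \dots which is unchanged by $\tau_w$'' is exactly where this goes wrong, since the $C_a$-components of the two coordinates shift by \emph{different} amounts. Second, the step that would actually carry the proof --- the holonomy/rigidity computation showing $\autg(S)_{0_1}=1$ for a suitable choice of $L$ and $C$, uniformly in $4\le n\le p+2$ --- is only described, and you yourself flag it as the principal obstacle; as written it is a plan, not a proof. (If you did establish triviality of the point stabilizer you could still conclude, since $\autg(S)$ would then be intransitive or regular and in either case its orbitals cannot reproduce the valency-$p$ relations; but that computation is the entire content and is missing.)

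The paper avoids all of this: its proof is precisely the two facts you relegate to side remarks, assembled into a dichotomy. Take $S$ from Theorem \ref{thm:main-as1} with $G_\delta=C_n$. For $n=p+2$, Theorem \ref{thm:main1} says \emph{every} such scheme is non-schurian. For $4\le n\le p+1$, either $S$ is already non-schurian, or $S$ is schurian with $S//\Or(S)\cong C_n$ and Proposition \ref{schurian-non} produces a non-schurian scheme with the same intersection numbers, hence still satisfying $(\ref{A})$, $(\ref{B})$, $(\ref{con-three})$ and $\delta=n$. If you reorganize your final paragraph into this case distinction and drop the automorphism-group computation, you recover the intended argument.
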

\begin{proof}
For each $4 \leq n \leq p+2$, it follows from Theorem \ref{thm:main-as1} that there exists an association scheme $S$ satisfying the above conditions.

When $n=p+2$, we are done by Theorem \ref{thm:main1}.

When $4 \leq n \leq p+1$, if $S$ is non-schurian, then we are done.
If not, then we construct a non-schurian association scheme by Proposition \ref{schurian-non}.
\end{proof}

\section{The cases of $\delta(S) = p^2$ and $\delta(S) = p^2+p+1$}\label{sec:main4}
For the schurian scheme $\mathcal{R}_G$ given in Section \ref{sec:intro},
there is a one-to-one correspondence between $X$ and the set $G/G_x$ of right cosets of a point stabilizer $G_x$.
Also there is a one-to-one correspondence between $\mathcal{R}_G$ and the set of double cosets of $G_x$ in $G$.

\vskip5pt
In the following subsections, for a given group $G$ and its subgroup $H$, we consider the action on the right cosets $G/H$ by the right multiplication.

\vskip5pt
\subsection{$\delta(S) = p^2$}

Let $P := \{ A(a,b) \mid a, b \in \mathbb{F}_p \}$, where $A(a,b)$ is a $3 \times 3$ matrix defined by
\begin{equation}\label{C}
\begin{bmatrix}
1 &  a  & b   \\
0 &  1  & a   \\
0 &  0  & 1
\end{bmatrix}.
\end{equation}

Define a subgroup of $\mathrm{AGL}_3 (\mathbb{F}_p)$ as follows:
\[G := \{ t_{A, x} \mid A \in P, x \in \mathbb{F}_p^3 \},\]
where $t_{A, x} : y \mapsto yA + x$ is a mapping from $\mathbb{F}_p^3$ to $\mathbb{F}_p^3$.

Put $H := \{ t_{I, x} \mid x \in \{ (c, 0, 0) \mid c \in \mathbb{F}_p \} \}$.
We show that the action of $G$ on $G/H \times G/H$ induces an association scheme of order $p^4$ such that (\ref{A}) and (\ref{B}) hold.

\vskip10pt
\textbf{Claim 1}: $G \unrhd N_G(H)=\{t_{I, x} \mid x \in \mathbb{F}_p^3 \}$.

First of all, we verify $N_G(H)=\{t_{I, x} \mid x \in \mathbb{F}_p^3 \}$.
Since $t_{A, z}^{-1} : y \mapsto yA^{-1} - zA^{-1}$,
we have $t_{A, z} t_{I, x} t_{A, z}^{-1} : y \mapsto y + xA$.
For $t_{I, x} \in H, t_{A, z} \in G$, we have
\[t_{A, z} t_{I, x} t_{A, z}^{-1} \in H  ~~\text{if and only if}~~  xA=(a', a'a, a'b) \in \{ (c, 0, 0) \mid c \in \mathbb{F}_p \},\]
where $x = (a', 0, 0)$ and $A$ has the form of (\ref{C}).
Thus, $t_{A, z} \in N_G(H)$ if and only if $a=b=0$.
It is easy to see that $G \unrhd N_G(H)$.

\vskip10pt
\textbf{Claim 2}: For each $t_{B, z} \in G \setminus N_G(H)$,  $H t_{B, z} H$ is the union of $p$ right cosets in $G/H$.

First of all, we take an element $t_{I, x_1} t_{B, z} t_{I, x_2}$ in $H t_{B, z} H$, where $x_1 = (a_1, 0, 0)$ and $x_2 = (a_2, 0, 0)$.
Since $t_{I, x_1} t_{B, z} t_{I, x_2} : y \mapsto ((yI+x_2)B + z)I + x_1 = yB + x_2B +z +x_1$,
this implies
\[t_{I, x_1} t_{B, z} t_{I, x_2} \in H \{ t_{B, w} \mid w \in \{ x_2 B + z \mid x_2 = (a_2, 0, 0), a_2 \in  \mathbb{F}_p \} \}.\]
Thus, we have
\[H t_{B, z} H = H \{ t_{B, w} \mid w \in \{ x_2 B + z \mid x_2 = (a_2, 0, 0), a_2 \in  \mathbb{F}_p \} \}.\]

\vskip10pt
\textbf{Claim 3}: The right stabilizer $St_R(t_{B, z})$ of $H t_{B, z} H$ is
$H \{ t_{I, x} \mid x \in \langle(\delta,0,0)B^{-1}\rangle \}$ for some $\delta \in \mathbb{F}_p^\times$.

For $t_{I, x_1} t_{B, z} t_{I, x_2} \in H t_{B, z} H, ~~t_{A, z_1} \in G$,
since $t_{I, x_1} t_{B, z} t_{I, x_2} t_{A, z_1} : y \mapsto yAB + (z_1 + x_2)B + z + x_1$,
we have
\[t_{I, x_1} t_{B, z} t_{I, x_2} t_{A, z_1} \in H t_{B, z} H\]
\[\text{if and only if}\]
\[A=I ~\text{and}~ z_1 \in  \{(\delta, 0, 0)B^{-1} + (c, 0, 0) \mid c \in \mathbb{F}_p \} ~\text{for some}~ \delta \in \mathbb{F}_p^\times.\]

\vskip5pt
From Claim $1$, we have that $N_G(H)/H \simeq C_p \times C_p$ and the action of $G$ on $G/N_G(H) \times G/N_G(H)$ induces a thin association scheme.
From Claim $3$, we have that $St_R(t_{B, z}) \cup St_R(t_{B^{-1}, z})$ generates $N_G(H)$.
So, (\ref{A}) holds.
It follows from Claim $2$ that (\ref{B}) holds.

%%%%%%%%%%%%%%%%%%%%%%%%%%%%%%%%%%%%%%%%%%%%%%%%%%%%%%%%%%%%%%%%%%%%%%%%%%%%%%%%%%%%%%%%%%%%%%%%%%%%%%%%%
\vskip5pt
\subsection{$\delta(S) = p^2+p+1$}

\vskip10pt
Let $K$ be a subgroup of $\mathbb{F}_{p^3}^\times$ with $|K|=p^2 + p + 1$, where $p$ is an odd prime.

Define
\[G:= \{t_{a,b} \mid a \in K, b \in \mathbb{F}_{p^3} \},\]
where $t_{a, b} : y \mapsto ay + b$ is a mapping from $\mathbb{F}_{p^3}$ to $\mathbb{F}_{p^3}$.

Put $H:= \{t_{1,c} \mid c \in \mathbb{F}_p \}$.
Note that $N_G(H) \cap \{ t_{a, 0} \mid a \in K \} = \{ t_{1, 0} \}$ since $|\mathbb{F}_p^\times|$ does not divide $|K|$.
We show that the action of $G$ on $G/H \times G/H$ induces an association scheme of order $p^2(p^2 + p+ 1)$ such that (\ref{A}) and (\ref{B}) hold.

\vskip10pt
\textbf{Claim 1}: $G \unrhd N_G(H)=\{ t_{1,d} \mid d \in \mathbb{F}_{p^3} \}$.

First of all, we verify $N_G(H)=\{t_{1, d} \mid d \in \mathbb{F}_{p^3} \}$.
Since $t_{a, b}^{-1} : y \mapsto a^{-1}y - a^{-1}b$,
we have $t_{a,b} t_{1,c} t_{a,b}^{-1} : y \mapsto y + ac$.
For $t_{1, c} \in H, t_{a,b} \in G$, we have
\[t_{a,b} t_{1,c} t_{a,b}^{-1} \in H  ~~\text{if and only if}~~  a=1.\]
This implies that $N_G(H)=\{t_{1, d} \mid d \in \mathbb{F}_{p^3} \}$.
It is easy to see that $G \unrhd N_G(H)$.

\vskip10pt
\textbf{Claim 2}: For each $t_{a, b} \in G \setminus N_G(H)$,  $H t_{a, b} H$ is the union of $p$ right cosets in $G/H$.

First of all, we take an element $t_{1, c_1} t_{a, b} t_{1, c_2}$ in $H t_{a, b} H$.
Since $t_{1, c_1} t_{a, b} t_{1, c_2} : y \mapsto a(y + c_2) + b + c_1 = ay + ac_2 + b + c_1$,
this implies $t_{1, c_1} t_{a, b} t_{1, c_2} \in H  \{ t_{a, z} \mid z \in \{ ac_2 + b \mid c_2 \in \mathbb{F}_p \} \}$.
Thus, we have $H t_{a, b} H = H  \{ t_{a, z} \mid z \in \{ ac_2 + b \mid c_2 \in \mathbb{F}_p \} \}$.

\vskip10pt
\textbf{Claim 3}: The right stabilizer $St_R(t_{a, b})$ of $H t_{a, b} H$ is $H\{ t_{1, c} \mid c \in a^{-1}\mathbb{F}_p \}$.

For $t_{1, c_1} t_{a, b} t_{1, c_2} \in H t_{a, b} H, ~~t_{d, e} \in G$,
since $t_{1, c_1} t_{a, b} t_{1, c_2} t_{d, e} : y \mapsto ady + a(e + c_2) + b + c_1$,
we have
\[t_{1, c_1} t_{a, b} t_{1, c_2} t_{d, e} \in H t_{a, b} H
~~\text{if and only if}~~ d=1 ~\text{and}~ e \in a^{-1}\mathbb{F}_p + \mathbb{F}_p.\]

\vskip5pt
From Claim $1$ and $3$, we have that $N_G(H)/H \simeq C_p \times C_p$ and the action of $G$ on $G/N_G(H) \times G/N_G(H)$ induces a thin association scheme.
From Claim $3$, we have that $St_R(t_{a, b}) \cup St_R(t_{a^{-1}, b})$ generates $N_G(H)$.
So, (\ref{A}) holds.
It follows from Claim $2$ that (\ref{B}) holds.

%%%%%%%%%%%%%%%%%%%%%%%%%%%%%%%%%%%%%%%%%%%%%%%%%%%%%%%%%%%%%%%%%%%%%%%%%%%%%%%%%%%%%%%%%%%%%%%%%%%%%%%%%
\section{Concluding Remarks}\label{sec:con}

In \cite[Section 4]{mp}, it was shown that any reduced Klein configuration defines an incidence structure.
By a parallel argument, it was obtained a partial linear space over a scheme under certain assumption (see \cite{chk}).

\vskip5pt
In geometric terms, a \textit{partial linear space} is a pair $(\mathcal{P},\mathcal{L})$, where $\mathcal{P}$ is a set of points and $\mathcal{L}$ is a family of subsets of $\mathcal{P}$, called \textit{lines} such that every pair of points is contained in at most one line and every line contains at least two points. Moreover, $(\mathcal{P}, \mathcal{L})$ is called a \textit{linear space} if every pair of points is contained in exactly one line.

\vskip5pt
Let $S$ be an association scheme on $X$ with $T:= \Or(S) \cong C_p \times C_p$.
Let $\{ X_i \mid i=1, 2, \dots, m  \}$ denote the set of cosets of T in $S$.
Then we define
\[L(s):=\{ t \in T \mid ts=s \}.\]

For given $i, j$ and $s \in S$ with $s \cap (X_i\times X_j) \neq \emptyset$, we define $L_{ij}(s):= L(s)$.
Since $L_{ij}(s) = L_{ij}(s')$ for $s' \in TsT$, we write $L_{ij}$ for $L_{ij}(s)$.

\vskip5pt
Define an incidence structure $(\mathcal{P}, \mathcal{L})$ as follows:
\begin{enumerate}
\item $\mathcal{P}:= \{ i \mid i= 1, 2, \dots, m  \}$;
\item $\mathcal{L}:= \{ L_i(M) \mid \{1_X\} < M < T, |L_i(M)|\neq 1, 1 \leq i \leq m \}$, where $L_i(M):= \{ i \} \cup \{ j \mid L_{ij}=M \}$.
\end{enumerate}
An \textit{automorphism} $\tau$ of $(\mathcal{P}, \mathcal{L})$ is a permutation of $\mathcal{P}$ such that $\mathcal{L}^\tau = \mathcal{L}$.
We denote by $\mathrm{Aut}(\mathcal{P}, \mathcal{L})$ the set of automorphisms of $(\mathcal{P}, \mathcal{L})$.

\vskip5pt
Then $(\mathcal{P}, \mathcal{L})$ is a partial linear space in which any point belongs to at most $p+1$ lines
and $\mathrm{Aut}(S//T)$ is a subgroup of $\mathrm{Aut}(\mathcal{P}, \mathcal{L})$ (see \cite{chk}).

\begin{thm}\label{thm:space}
Let $(\mathcal{P}, \mathcal{L})$ be a linear space.
Suppose that $\mathrm{Aut}(\mathcal{P}, \mathcal{L})$ has a regular subgroup $G$ on $\mathcal{P}$
and $|\{l \in \mathcal{L} \mid x \in l \}| < p+1$, where $x \in \mathcal{P}$.
Then the linear space $(\mathcal{P}, \mathcal{L})$ gives a construction of an association scheme $S$ such that $(\ref{A})$ and $(\ref{B})$ hold and
\begin{enumerate}
\item $S//\Or(S) \simeq G$,
\item the incidence structure arising from $S$ is isomorphic to $(\mathcal{P}, \mathcal{L})$.
\end{enumerate}
\end{thm}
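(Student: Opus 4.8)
The plan is to realize $(\mathcal P,\mathcal L)$ through the construction of Theorem~\ref{thm:main-as1}, now allowing the map $L$ of $(\ref{left-mapping})$ to be non-injective. Using the regular subgroup $G\le\mathrm{Aut}(\mathcal P,\mathcal L)$ I identify $\mathcal P$ with $G$ (with $G$ acting by left multiplication), set $V:=\mathbb F_p^2$, $X:=V\times G$, and take $G_\delta:=G$, so that $\delta(S)=|G|$. Because every pair of points lies on a unique line, the lines through $1$ partition $G^{\times}$; by hypothesis there are fewer than $p+1$ of them, so I can attach to each such line a distinct element of $P(V)$ and still leave one subgroup $C_0\in P(V)$ unused. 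For $a\in G^{\times}$ I put $L_a:=$ the subgroup attached to the line through $1$ and $a$, and $C_a:=C_0$ for all $a$. Then $L_a\neq C_0=C_{a^{-1}}$ and $L_{a^{-1}}\neq C_0$, so $(\ref{left-mapping})$ and $(\ref{central-mapping})$ hold, while $L_a=L_b$ precisely when $1,a,b$ are collinear. Defining $h_w,t_a$ by $(\ref{relation1})$--$(\ref{relation3})$, I set $S:=\{t_a\cdot h_w\mid a\in G,\ w\in V\}$.

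First I would verify that $S$ is an association scheme. Claims~1--5 in the proof of Theorem~\ref{thm:main-as1} use only $L_a\neq C_a=C_{a^{-1}}$ and $L_{a^{-1}}\neq C_a$, hence carry over verbatim, yielding the partition property, $t_a^\ast=t_{a^{-1}}$, and $A_{t_a}A_{t_{a^{-1}}}=p\sum_{w\in L_a}A_{h_w}$. The only new phenomenon is the product $A_{t_a}A_{t_d}$ with $ad\neq 1_G$: Claim~6 treats it under $L_{a^{-1}}\neq L_d$ (automatic when $L$ is injective), but now $L_{a^{-1}}=L_d$ can occur. The decisive point is that $L_{a^{-1}}=L_d$ means $1,a^{-1},d$ are collinear; left-translating this line by $a$ and by $d^{-1}$ — both automorphisms in $G$ — shows $1,a,ad$ and $1,d^{-1},(ad)^{-1}$ are collinear as well, so $L_{ad}=L_a$ and $L_{(ad)^{-1}}=L_{d^{-1}}$. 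A direct count then gives $(L_{a^{-1}}+x)\cap(L_d+y)=L_d+x$ for $x=y\in C_0$ and $\emptyset$ otherwise, whence $A_{t_a}A_{t_d}=p\,A_{t_{ad}}$; since $n_{t_a}n_{t_d}=p^2=p\cdot n_{t_{ad}}$ and $t_{ad}\in S$, this is an admissible structure constant. Together with Claims~1--6 this establishes $A_{t_a}A_{t_d}=\sum_{s\in S}c_{t_a t_d}^{\,s}A_s$ in every case, and the remaining axioms follow as in Proposition~\ref{thm:conSchur}.

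It then remains to read off the invariants. The set $\{h_w\mid w\in V\}$ is a thin closed subset isomorphic to $C_p\times C_p$, and $t_a^\ast t_a=\{h_w\mid w\in L_{a^{-1}}\}$ by Claim~5; as $a$ ranges over $G^{\times}$ the directions $L_{a^{-1}}$ run over all line-directions, which span $V$ as soon as $(\mathcal P,\mathcal L)$ has at least two lines, so $(\ref{thinresidue})$ yields $\Or(S)=\{h_w\mid w\in V\}\cong C_p\times C_p$, giving $(\ref{A})$; and every element of $S\setminus\Or(S)$ is some $t_a h_w$ of valency $p$, giving $(\ref{B})$. Reducing the product formulas modulo $\{h_w\}$ shows that $t_a h_w\mapsto a$ is an isomorphism $S\Qd\Or(S)\xrightarrow{\sim}G$, which is (i). Finally, for cosets $X_g,X_{g'}$ the connecting relation is $t_{g^{-1}g'}$, so $L_{gg'}=L_{g^{-1}g'}$ is the direction of the line through $g,g'$; thus $L_g(M)$ is exactly the line of direction $M$ through $g$, the free direction $C_0$ contributes no line, and $\mathcal L$ is recovered, which is (ii).

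The main obstacle is the degenerate product case $L_{a^{-1}}=L_d$: it is precisely the configuration that the injectivity of $L$ in Theorem~\ref{thm:main-as1} was arranged to avoid, and the crux is that the collinearity imposed by the linear-space axiom, transported by the regular $G$-action, is what forces this product to close up inside $S$ with the correct valency. A secondary point worth recording is the mild nondegeneracy behind $(\ref{A})$: were $(\mathcal P,\mathcal L)$ a single line, the directions would span only a line in $V$ and $\Or(S)$ would have rank $1$, so the statement is understood for linear spaces with more than one line.
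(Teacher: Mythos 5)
Your proposal is correct and follows essentially the same route as the paper's own proof: identify $\mathcal{P}$ with $G$, attach distinct subgroups of $V$ to the lines through the base point (possible since there are fewer than $p+1$ of them, leaving a spare subgroup for $C$), reuse the relations of Theorem~\ref{thm:main-as1} with $L$ no longer injective, and isolate as the only new case the product $A_{t_a}A_{t_d}$ when the two middle lines coincide, which closes up as $p\,A_{t_{ad}}$ exactly as in the paper's Claim~3. Your explicit remarks --- fixing the identification so that $G$ acts by left multiplication so that collinearity of $1,a^{-1},d$ transports to $1,a,ad$ and to $1,d^{-1},(ad)^{-1}$, and noting that $(\ref{A})$ requires the linear space to have more than one line --- make precise two points the paper leaves implicit, but do not change the argument.
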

\begin{proof}

We identify $\mathcal{P}$ with $G$. In particular, we assume that $x$ is the identity of $\mathcal{P}$.
For a fixed $x$, we define $N= \{l \in \mathcal{L} \mid x \in l \}$.
Set $V:= \mathbb{F}_p^2$, $X:= V \times \mathcal{P}$ and $\mathcal{P}^{\times}:= \mathcal{P} \setminus \{ x \}$.
We denote by $u_a$ an element $(u, a)$ of $X$ for short.
We denote by $\{ L_l \mid l \in N \}$ the set of $|N|$ distinct subgroups of order $p$ in $V$.
Fix a subgroup $C$ of $V$ such that $C \not\in \{ L_l \mid l \in N \}$.

\vskip10pt
Define binary relations on $X$ as follows:
\[ h_w = \{(u_a, v_a) \mid u - v = w, a \in \mathcal{P} \} ~~~\text{for each}~ w \in V; \]
\[ t_x = 1_X; \]
\[ t_a =  \left\{(u_b, v_c) \mid b^{-1}c  = a, (u, v) \in \bigcup_{y \in C} (L_l+y)  \times (L_m+y) \right\} ~~~\text{for each}~ a \in \mathcal{P}^{\times},\]
where $L_l$ and $L_{m}$ are elements of $\{ L_n \mid n \in N \}$ such that $a \in l$ and $m= \{ da^{-1} \mid d \in l \}$.

Put \[S :=  \{t_a \cdot h_w \mid w \in V, a \in \mathcal{P} \}.\]

\vskip10pt
Now we show that $S$ is an association scheme on $X$.
Clearly, $1_X \in S$ and $h_w^\ast = h_{-w}$ for each $w \in V$.
It follows from the same argument of Claim $1$ and $2$ of Theorem \ref{thm:main-as1} that $S$ is a partition of $X \times X$.

\vskip5pt
\textbf{Claim 1}: $t_a^\ast = t_{a^{-1}}$ for each $a \in \mathcal{P}^{\times}$.

By the definition of $t_a$, we have
\begin{eqnarray*}
t_a^\ast & = & \left\{(v_c, u_b) \mid b^{-1}c = a, (u, v) \in \bigcup_{y \in C} (L_l +y) \times (L_m +y)\right\}  \\
                                  & = & \left\{(v_c, u_b) \mid c^{-1}b = a^{-1}, (v, u) \in \bigcup_{y \in C} (L_m +y) \times (L_l +y) \right\}.
\end{eqnarray*}
Since $l = \{ ea \mid e \in m \}$, we have $t_a^\ast = t_{a^{-1}}$.

\vskip5pt
\textbf{Claim 2}: $A_{t_a} A_{t_{a^{-1}}} = p \sum_{w \in L_l} A_{h_w}$ for each $a \in \mathcal{P}^{\times}$, where $a \in l$.

Since $t_a^\ast = t_{a^{-1}}$,
we have
\[t_a \cdot t_{a^{-1}} = \left\{(u_b, w_b) \mid  b \in F_q,  (u, w) \in \bigcup_{y \in C} (L_l +y)  \times (L_l +y) \right\}.\]
Thus, $t_a \cdot t_{a^{-1}} = \bigcup_{w \in L_l} h_w$.
For $(u_b, w_b) \in \bigcup_{w \in L_l} h_w$, we have
\[u_b t_a \cap w_b t_a = \{ v_{ba} \mid v \in L_m + y \}\]
for some $y \in C$,
where $m = \{ da^{-1} \mid d \in l \}$.
This completes the proof of Claim 2.

\vskip5pt
\textbf{Claim 3}: $A_{t_a} A_{t_d} =  p A_{t_{ad}}$ if $l_1 = l_2$ for $a^{-1} \in l_1, d \in l_2$.

Put $t_a =  \{(u_b, v_c) \mid b^{-1}c  = a, (u, v) \in \bigcup_{y \in C} (L_l+y)  \times (L_{l_1}+y) \}$ and
$t_d =  \{(u'_c, v'_e) \mid c^{-1}e  = d, (u', v') \in \bigcup_{y \in C} (L_{l_2}+y)  \times (L_m+y) \}$.
Since $l_1 = l_2$,
we have
\[t_a \cdot t_d = \left\{(u_b, w_e) \mid  b^{-1}e  = ad,  (u, w) \in \bigcup_{y \in C} (L_l +y)  \times (L_m +y) \right\}.\]

\vskip5pt
\textbf{Claim 4}: $A_{t_a} A_{t_d} =  \sum_{w \in L_l } A_{t_{ad} \cdot h_w}$ if $l_1 \neq l_2$ for $a^{-1} \in l_1, d \in l_2$,
where $ad \in l \in N$.

Put $t_a =  \{(u_b, v_c) \mid b^{-1}c  = a, (u, v) \in \bigcup_{y \in C} (L_l+y)  \times (L_{l_1}+y) \}$ and
$t_d =  \{(u'_c, v'_e) \mid c^{-1}e  = d, (u', v') \in \bigcup_{y \in C} (L_{l_2}+y)  \times (L_m+y) \}$.
Clearly, $t_a \cdot t_d \subseteq t_{ad} \cdot \widetilde{L}_{f}$, where $ad \in f \in N$.
For each $(u_b, w_e) \in t_{ad} \cdot \widetilde{L}_m$, we have $|u_b t_a \cap w_e t_d^\ast|=1$,
since $|(L_{l_1}+ y) \cap (L_{l_2}+ y)|=1$ for $y \in C$.

\vskip5pt
For $t_a \cdot h_w, t_b \cdot h_v \in S$, we have $(t_a \cdot h_w) \cdot (t_b \cdot h_v) = (h_u \cdot t_a) \cdot (t_b \cdot h_v)$ for some $u \in V$.
The above claims complete the proof.
\end{proof}

\vskip 10pt
When $\Or(S) \cong C_2 \times C_2$, based on the computational result of \cite{hanakimi}
we observe the existence of schemes with respect to $\delta(S)$.
In Table \ref{hm}, the symbols $\mathrm{E}$ and $\mathrm{N}$ mean the existence and non-existence, respectively.

\begin{table}[h]
\begin{center}
\begin{tabular}{|c||c|c|c|c|c|}\hline
$\delta(S)$ & 3&4&5&6&7 \\ \hline
$|X|$ & 12&16&20&24&28 \\ \hline
$(X,S)$ &E&E& N& N& E \\ \hline
\end{tabular}
\caption{The case of $\Or(S) \cong C_2 \times C_2$}\label{hm}
\end{center}
\end{table}

\begin{itemize}
\item It follows from Theorem \ref{thm:del3-unique} that
the scheme corresponding to the case of $\delta(S)=3$ of Table \ref{hm} is \texttt{as-12.no.49} in \cite{hanakimi}.

\end{itemize}

\vskip 10pt
When $\Or(S) \cong C_3 \times C_3$, based on the computational result of \cite{hanakimi}
we observe the existence of schemes with respect to $\delta(S)$.
In Table \ref{hm3}, the symbol $?$ means that the existence is not yet determined.

\begin{table}[h]
\begin{center}
\begin{tabular}{|c||c|c|c|c|c|c|c|c|c|c|c|}\hline
$\delta(S)$ & 3&4&5&6&7 &8&9&10&11&12&13 \\ \hline
$|X|$ & 27&36&45&54&63 &72&81&90&99&108&117 \\ \hline
$(X,S)$ &E& E& E& $?$&  $?$ &$?$ &E& $?$& $?$& $?$&E \\ \hline
\end{tabular}
\caption{The case of $\Or(S) \cong C_3 \times C_3$}\label{hm3}
\end{center}
\end{table}

\begin{itemize}
\item It follows from Theorem \ref{thm:main1} that
the non-schurian scheme corresponding to the case of $\delta(S)=5$ of Table \ref{hm} is \texttt{as-45.class-20.no.5} in \cite{hanakimi}.
\item In the cases of $\delta(S) =9, 13$, Section \ref{sec:main4} guarantees the existence of such schemes.
\end{itemize}

\vskip 10pt
Finally, we close our article by giving further work.
\begin{itemize}
\item For $\delta(S) \geq p+3$, we need to consider the existence of association schemes. Based on Theorem \ref{thm:space} we guess that there is a connection between linear spaces and association schemes.
\item In the case of $\delta(S) = p^2$ or $\delta(S) =p^2 +p +1$, we gave a construction of schurian association schemes.
It seem to be an important problem to find out a method which is analogous to Proposition \ref{schurian-non}.
\end{itemize}

\vskip15pt
\textbf{Acknowledgement}

The authors would like to thank anonymous referees for their valuable comments.

\bibstyle{plain}

\end{document}